\def\Re{{\mathrm{Re}}\hspace{0.5mm}}
\def\Ai{{\mathrm{Ai}}}
\def\Bi{{\mathrm{Bi}}}
\def\i{\mathrm{i}}
\def\e{\mathrm{e}}
\renewcommand\O[1]{\mathcal{O}\left(#1\right)}
\newtheorem{thm}{\hskip\parindent Theorem}
\newtheorem{lem}{\hskip\parindent Lemma}[section]
\newtheorem{rem}{\hskip\parindent Remark}
\theoremstyle{definition}
\numberwithin{equation}{section}
\begin{document}
\title%[Uniform Asymptotic Expansions for Linear Difference Equations]
{Linear Difference Equations with a Transition Point at the Origin}
\author{Lihua Cao}
\address{Department of Mathematics, Shenzhen University, Guangdong, 518060, China
and
Department of Mathematics, City University of Hong Kong, Kowloon, Hong Kong}
\email{macaolh@szu.edu.cn}

\author{Yutian Li}
\address
{Institute of Computational and Theoretical Studies, and Department of Mathematics,
Hong Kong Baptist University, Kowloon, Hong Kong}
\email{yutianli@hkbu.edu.hk}

\date{}

\maketitle

\begin{abstract}
A pair of linearly independent asymptotic solutions are constructed for the second-order linear difference equation
\begin{equation*}
P_{n+1}(x)-(A_{n}x+B_{n})P_{n}(x)+P_{n-1}(x)=0,
\end{equation*}
where $A_n$ and $B_n$ have asymptotic expansions of the form
\begin{equation*}
A_n\sim n^{-\theta}\sum_{s=0}^\infty\frac{\alpha_s}{n^s},\qquad
B_n\sim\sum_{s=0}^\infty\frac{\beta_s}{n^s},
\end{equation*}
with $\theta\neq0$ and $\alpha_0\neq0$ being real numbers, and $\beta_0=\pm2$.
Our result hold uniformly for the scaled variable $t$ in an infinite interval containing the transition point $t_1=0$,
where $t=(n+\tau_0)^{-\theta} x$ and $\tau_0$ is a small shift.
In particular,
it is shown how the Bessel functions $J_\nu$ and $Y_\nu$ get involved in the uniform asymptotic expansions of the solutions to
the above three-term recurrence relation. As an illustration of the main result,
we derive a uniform asymptotic expansion for the orthogonal polynomials associated with
the Laguerre-type weight $x^\alpha\exp(-q_mx^m)$, $x>0$, where $m$ is a positive integer, $\alpha>-1$ and $q_m>0$.
\end{abstract}

\textit{Keywords:} difference equation; transition point; three-term recurrence relations; uniform asymptotic expansions; Bessel functions; orthogonal polynomials

\textit{Mathematics Subject Classification (2010):} 41A60, 39A10, 33C45

\section{Introduction}
Orthogonal polynomials play an important role in many branches of
mathematical physics, for instance, quantum mechanics, scattering
theory and statistical mechanics. A major topic in orthogonal
polynomials is the study of their asymptotic behavior as the
degree grows to infinity. Since the classical orthogonal
polynomials (Hermite, Laguarre and Jacobi) all satisfy a
second-order linear differential equation, their asymptotic
behavior can be obtained from the WKB approximation or the turning
point theory \cite{Olver}. For discrete orthogonal polynomials
(e.g., Charlier, Meixner and Krawtchouk), one can use their
generating function to obtain a Cauchy integral representation and
then apply the steepest descent method or its extensions
\cite{Olver,Wong}. However, there are orthogonal polynomials that
neither satisfy any differential equation nor have integral
representations. A powerful method, known as the nonlinear steepest descent
method for Riemann-Hilbert problems, has recently been developed
that can be applied to such polynomials. Papers that deserve
special mention include Deift \textit{et al.}~\cite{Deift}, Bleher and Its~\cite{BleherIts},
Kriecherbauer and McLaughlin~\cite{Krie}, Baik \textit{et al.}~\cite{Baik},
Ou and Wong~\cite{OW} and Zhou \textit{et al.}~\cite{Zhao}.
Despite the great success achieved by this powerful method, the
Riemann-Hilbert analysis depends heavily on the analyticity of the weight functions,
and the argument and results obtained by the method often appear in a very complicated manner.

In our view, a more natural approach to derive asymptotic
expansions for orthogonal polynomials is to develop an asymptotic
theory for linear second-order difference equations, just in the
same way as Langer, Cherry, Olver had done for linear second-order differential
equations; see the definitive book by Olver~\cite{Olver}.
Our view is based on the fact that any sequence of
orthogonal polynomials satisfies a three-term recurrence relation
of the form
\begin{equation}\label{eq:1.1}
p_{n+1}(x)=(a_{n}x+b_{n})p_{n}(x)-c_{n}p_{n-1}(x),\qquad
n=1,2,\dots ,
\end{equation}
where $a_{n}$, $b_{n}$ and $c_{n}$ are constants; see~\cite[p.43]{Szego}.
If $x$ is a fixed number,
then this recurrence relation is equivalent to a second-order linear difference equation of the form
\begin{equation}\label{eq:1.2}
y(n+2)+n^{p}a(n)y(n+1)+n^{q}b(n)y(n)=0,
\end{equation}
where $p$ and $q$ are integers.
We assume that the coefficient functions $ a(n)$ and $b(n)$ have asymptotic expansions
\begin{equation}\label{eq:1.3}
a(n)\sim\sum_{s=0}^{\infty }\frac{a _{s}}{n^{s}},\qquad
b(n)\sim\sum_{s=0}^{\infty }\frac{b _{s}}{n^{s}},
\end{equation}
where $a _{0}\neq 0$ and $b_{0}\neq 0$.
When $p=q=0$,
asymptotic solutions to this equation are classified by the roots of the \emph{characteristic equation}
\begin{equation} \label{eq:1.4}
\rho ^{2}+a_{0}\rho +b_{0}=0.
\end{equation}
If $\rho _{1}\neq \rho _{2},$ \textit{i.e.}, $a_{0}^{2}\neq 4b_{0},$ then
Birkhoff~\cite{Birkhoff1911} showed that (\ref{eq:1.2}) has two
linearly independent solutions, both of the form
\begin{equation}\label{eq:1.5}
y(n)\sim \rho ^{n}n^{\alpha }\sum_{s=0}^{\infty
}\frac{c_{s}}{n^{s}},\qquad n\rightarrow \infty ,
\end{equation}
where
\begin{equation}\label{eq:1.6}
\alpha =-\frac{a_{1}\rho+b_{1}}{2\rho ^{2}+a_{0}\rho
}=\frac{a_{1}\rho +b_{1}}{2b_{0}+a_{0}\rho },
\end{equation}
and the coefficients $c_{s}$ can be determined recursively.
This construction fails when and only when $\rho_{1}=\rho _{2}$,
\textit{i.e.}, when $a_{0}^{2}=4b_{0}$. Motivated by the
terminologies in differential equations \cite[p.230]{Olver}, we
shall call series of the form (\ref{eq:1.5}) \emph{normal series}.
If $\rho _{1}=\rho _{2}$, but $2b_{1}\neq a_{0}a_{1}$,
then Adams~\cite{Adams} constructed two linearly independent power series solutions,
which we will call \emph{subnormal series}.

Adams also studied the case
when $2b_{1}=a_{0}a_{1},$ but his analysis was incomplete, as
pointed out by Birkhoff~\cite{Birkhoff1930}. A powerful asymptotic
theory for difference equations was later given by Birkhoff~\cite{Birkhoff1930},
Birkhoff and Trjitzinsky~\cite{BirkhoffTrjitzinsky}.
However, their analysis has been considered too complicated and even impenetrable.
A more accessible approach to the results mentioned above has been
given in more recent years by Wong and Li~\cite{WongLi1992a,WongLi1992b}.
In their first paper \cite
{WongLi1992a}, they presented recursive formulas for the coefficients in the power series solutions
when $p=q=0$ in (\ref{eq:1.2}), obtained both norm and subnormal series solutions.
Their second paper \cite{WongLi1992b} dealt with
the more general case, namely, equation (\ref{eq:1.2}) with the
exponents $p$ and $q$ being not both zero.
Numerically computable error bounds for these asymptotic series can be found in our recent paper
\cite{CaoLiZhang}.

Returning to the three-term recurrence relation (\ref{eq:1.1}), we
note that the results mentioned thus for apply only when $x$ is a
fixed number. When $x$ is a variable and allowed to vary, the
roots of the characteristic equation associated with
(\ref{eq:1.1}) may coalesce, and normal series solutions may
collapse to become subnormal series. Not much work has been done
in this area until just recently.
In a series of papers
\cite{ww1,ww2,ww3}, Wang and Wong have derived asymptotic
expansions for the solutions to (\ref{eq:1.1}), which hold
``uniformly" for $x$ in infinite intervals. They first define a
sequence $\left\{ K_{n}\right\} $ recursively by
$K_{n+1}/K_{n}=c_{n},$ with $K_{0}$ and $K_{1}$ depending on the
particular sequence of polynomials,
where $c_n$ is one of the coefficients in equation (\ref{eq:1.1}). Then they put $A_{n}\equiv
a_{n}K_{n}/K_{n+1},$ $B_{n}\equiv b_{n}K_{n}/K_{n+1}$ and
$P_{n}(x)\equiv p_{n}(x)/K_{n},$ so that (\ref {eq:1.1}) becomes
\begin{equation}\label{eq:1.7}
P_{n+1}(x)-(A_{n}x+B_{n})P_{n}(x)+P_{n-1}(x)=0.
\end{equation}
The coefficients $A_n$ and $B_n$ are assumed to have
asymptotic expansions of the form
\begin{equation}\label{eq:1.8}
A_n\sim n^{-\theta}\sum_{s=0}^\infty\frac{\alpha_s}{n^s},\qquad
B_n\sim\sum_{s=0}^\infty\frac{\beta_s}{n^s},
\end{equation}
where $\theta$ is a real number and $\alpha_0\neq0$.
If $\tau_{0}$ is a constant and $N :=n+\tau _{0},$ then the expansions
in (\ref{eq:1.8}) can of course be recasted in the form
\begin{equation}\label{eq:1.9}
A_n\sim N^{-\theta}\sum_{s=0}^\infty\frac{\alpha_s'}{N^s},\qquad
B_n\sim\sum_{s=0}^\infty\frac{\beta_s'}{N^s}.
\end{equation}
In (\ref{eq:1.7}), we now set $x:=N^{\theta }t$ and
$P_{n}=\rho^{n}.$ Substituting (\ref{eq:1.9}) into
(\ref{eq:1.7}) and letting $n\rightarrow \infty $ (and hence $N
\rightarrow \infty $), we are led to the \emph{characteristic equation}
\begin{equation} \label{eq:1.10}
\rho ^{2}-(\alpha _{0}'t+\beta _{0}')\rho +1=0.
\end{equation}
(Note that $\alpha _{0}'=\alpha _{0}$ and $\beta_{0}'=\beta _{0}$).
The roots of this equation coincide when $t=t_{i}$, $i=1,2$, where
\begin{equation}\label{eq:1.11}
\alpha _{0}'t_{1}+\beta _{0}'=+2,\qquad
\alpha _{0}'t_{2}+\beta _{0}'=-2.
\end{equation}

The values of $t_{1}$ and $t_2$ play an important role in the asymptotic
theory of the three-term recurrence relation (\ref{eq:1.7}), and
they correspond to the transition points (\textit{i.e.}, turning points or singularities)
occurring in differential equations \cite{Olver}. For this reason,
Wang and Wong~\cite{ww2} also called them \emph{transition points}
of difference equations. Furthermore, they indicated that in terms of the exponent
$\theta$ in (\ref{eq:1.8}) and the transition point $t_{1}$,
there are three cases to be considered; namely, (i) $\theta \neq
0$ and $t_{1}\neq 0$, (ii) $\theta \neq 0$ and $t_{1}=0$, and
(iii) $\theta =0.$ In \cite{ww2}, they considered case (i), which
turns out to correspond to the turning-point problem for
second-order linear differential equations \cite[p.392]{Olver}.
They also showed that the asymptotic expansions of the solutions
involve the Airy functions $\Ai(\cdot)$, $\Bi(\cdot)$ and their
derivatives, and furthermore that the expansions hold uniformly
for $t$ in the infinite interval $\left[ 0,\infty \right) $. In
\cite{ww3}, they studied case (iii), and found that the
approximants of the asymptotic solutions are Bessel functions or
modified Bessel functions. 
The above results give two linearly indecent solutions in each case, however, the linear combination coefficients 
could not be determined until a recent groundbreaking progress of Wang and Wong~\cite{WangXSWong}, 
in which they solved the problem of determining the coefficients from the recurrence relation only.
We should mention that, the WKB approximations for difference equations have also been studied in recent years
by Costin and Costin~\cite{CostinCostin}, Geronimo~\cite{Geronimo}, Geronimo, Bruno and Van Assche~\cite{GOV}, Van Assche and Geronimo~\cite{VAG}, etc. 
All of these authors dealt with only case $(\mathrm{i})$, \textit{i.e.}, the Airy-type expansions or nonuniform asymptotics away from the turning points. 
It is worth noting that the determining of linear combination coefficients was treated by Van Assche and Geronimo~\cite{VAG} for the asymptotic approximations in the outer region. 

The purpose of this paper is to provide a solution to the problem
in case (ii), which is the only case that have been left out in
Wang and Wong's investigation of the asymptotic solutions to the
three-term recurrence relation (\ref{eq:1.7}). In this case, we
shall show that the approximants of the asymptotic solutions are
also Bessel functions or modified Bessel functions, but
the order of these Bessel functions and the transformation used (\textit{i.e.}, $\zeta(t)$ in (\ref{eq:4.9}) below)
differ from the one used in case (iii).
As an illustration of the main result,
we derive a uniform asymptotic expansion for the orthogonal polynomials associated with
the Laguerre-type weight $x^\alpha\exp(-q_mx^m)$, $x>0$, where $m$ is a positive integer, $\alpha>-1$ and $q_m>0$.

\section{Motivation leading to the expansion}

Turing point (or transition point) theory for second-order linear difference equations
was well established in \cite {ww2,ww3}. We simply state the procedure below.

Let $\tau_0:=-\alpha_1/(\alpha_0\theta)$ and $N:=n+\tau_0$. Then,
\begin{equation}\label{eq:2.1}
A_nx + B_n=n^{-\theta}\sum_{s=0}^{\infty}\frac{\alpha_s}{n^s}x+\sum_{s=0}^{\infty}\frac{\beta_s}{n^s}
:=\sum_{s=0}^{\infty}\frac{\alpha_s' t+\beta_s'}{N^s}.
\end{equation}
A slight computation yields
\begin{equation}\label{eq:2.2}
\alpha_0'=\alpha_0,\quad\alpha_1'=0,\quad\beta_0'=\beta_0,\quad\beta_1'=\beta_1,\quad\beta_2'=\beta_2+\beta_1\tau_0.
\end{equation}
If $\beta_0=2$ (or $-2$), it follows from (\ref{eq:1.11}) that one of the transition points is zero.
Without loss of generality, we may assume that $t_1=0<t_2$ and $\beta_0=2$, $\alpha_0<0$. (For other cases, see Remark~\ref{rem:1} below.)
Through out this paper, we assume that $\beta_1=0$, so that $\beta_1'=0$ and
\begin{equation}\label{eq:2.3}
\alpha_1't_1+\beta_1'=\beta_1=0.
\end{equation}
This assumption was used in the previous papers on transition point theory; see \cite[(2.7)]{ww2} and \cite[(2.5)]{ww3}.
(In most of the classical cases, $\beta_1=0$ and hence $\beta_1'=0$.
Also, we note that in \cite{DingleMorgan1} Dingle and Morgan have assumed a more strict condition that
$\alpha_{2s+1}=\beta_{2s+1}=0$ for $s=0,1,2\dots$.)

In what follows we will investigate the uniform
asymptotic expansions of solutions to (\ref{eq:1.7}) near the transition point $t_1=0$.
To this end, we seek a formal solution to (\ref{eq:1.7}) of the form
\begin{equation}\label{eq:2.4}
P_n(x)=\sum_{s=0}^\infty \chi_s(\xi)N^{-s},\qquad \xi=N^2\eta(t),
\end{equation}
for $t$ in a neighbourhood of $t_1=0$, where $\eta(t)$ is an increasing
function with $\eta(0)=0$. Since $x:=N^\theta t$ is fixed, when we change $n$ to $n\pm1$,
we must in the same time change $t$ to $t_\pm$, where
\begin{equation}\label{eq:2.5}
t_\pm=\left(1\pm\frac{1}{N}\right)^{-\theta}t.
\end{equation}
As a consequence, it follows from (\ref{eq:2.4}) that
\begin{equation}\label{eq:2.6}
P_{n\pm1}(x)=\sum_{s=0}^\infty \chi_s\left[(N\pm1)^2\eta\left(\Big(1\pm\frac{1}{N}\Big)^{-\theta}t\right)\right](N\pm1)^{-s}.
\end{equation}
For convenience, we also introduce the notations
\begin{equation} \label{eq:2.7}
Q_\pm(\xi):=(N\pm1)^2\eta\left[\Big(1\pm\frac{1}{N}\Big)^{-\theta}t\right]
\end{equation}
and
\begin{equation}\label{eq:2.8}
\Psi(\xi):=A_nx+B_n=\sum_{s=0}^\infty\frac{\alpha_s't+\beta_s'}{N^s}:=\sum_{s=0}^\infty\frac{T_s(\xi)}{N^s}.
\end{equation}
Substituting (\ref{eq:2.4}) and (\ref{eq:2.6}) into (\ref{eq:1.7}), we get
\begin{equation}\label{eq:2.9}
\sum_{s=0}^\infty\left\{\chi_s[Q_+(\xi)]\Big(1+\frac{1}{N}\Big)^{-s}+\chi_s[Q_-(\xi)]\Big(1-\frac{1}{N}\Big)^{-s}-\Psi(\xi)\chi_s(\xi)\right\}\frac{1}{N^s}=0.
\end{equation}
Noting that $t=\eta^{-1}(N^{-2}\xi)$, Taylor expansion gives
\begin{equation}\label{eq:2.10}
t=\sum_{j=0}^\infty\frac{\eta^{-1(j)}(0)}{j!}\left(\frac{\xi}{N^2}\right)^j=
\frac{1}{\eta'(0)}\frac{\xi}{N^2}+\O{N^{-4}}.
\end{equation}
Substituting the last equation into (\ref{eq:2.8}) yields,
\begin{equation}\label{eq:2.11}
T_0(\xi)=\beta_0'=\beta_0,\quad T_1(\xi)=\beta_1'=0,\quad T_2(\xi)=\frac{\alpha_0'\xi}{\eta'(0)}+\beta_2'.
\end{equation}
By expanding $\eta(t_\pm)$ at $t$, we obtain from (\ref{eq:2.5}) and (\ref{eq:2.7})
\begin{equation}\label{eq:2.12}
Q_{\pm}(\xi)=\xi \mp\frac{(\theta -2)\xi}{N}+\frac{(\theta ^{2}-3\theta+2)\xi }{2N^{2}}+\O{N^{-3}}:=\sum_{s=0}^\infty\frac{Q_s^{\pm}(\xi)}{N^s}.
\end{equation}
In equation (\ref{eq:2.9}), the coefficient of $N^{-s}$ vanishes for each $s\geq0$; \textit{i.e.},
\begin{equation}\label{eq:2.13}
\chi_s[Q_+(\xi)]\Big(1+\frac{1}{N}\Big)^{-s}+\chi_s[Q_-(\xi)]\Big(1-\frac{1}{N}\Big)^{-s}-\Psi(\xi)\chi_s(\xi)=0.
\end{equation}
In particular, for $s=0$, expanding the first two terms at $\xi$ and using (\ref{eq:2.8}) and (\ref{eq:2.12}) show that $\chi_{0}(\xi)$ satisfies Bessel's equation
\begin{equation}\label{eq:2.14}
\frac{d^{2}\chi_{0}}{d\xi ^{2}}+\left( \frac{\theta -1}{\theta
-2}\right) \frac{ 1}{\xi }\frac{d\chi_{0}}{d\xi }-\frac{1}{(\theta
-2)^{2}\xi ^{2}}\left( \frac{\alpha_{0}'\xi }{\eta'(0)}+\beta _{2}'\right) \chi_{0}=0,
\end{equation}
where $\theta\neq2$. For simplicity, we shall assume $0<\theta<2$. The analysis for the case $\theta<0$ or $\theta>2$ is very similar.
Thus, $\chi_{0}(\xi )$ can be expressed in terms
of Bessel functions:
\begin{equation*}
\chi_{0}(\xi )=C_{1}\xi ^{\frac{1}{2(2-\theta )}}J_{\nu }(b\xi ^{1/2})+C_{2}\xi
^{\frac{1}{2(2-\theta )}}Y_{\nu }(b\xi ^{1/2}),
\end{equation*}
where $C_1$, $C_2$ are two constants,
$b= \sqrt{\frac{-4\alpha _{0}'}{(\theta-2)^{2}\eta'(0)}}$ and
\begin{equation}\label{eq:2.15}
\nu =\sqrt{\frac{1+4\beta _{2}'}{(\theta-2)^{2}}}.
\end{equation}
Moreover, each of the subsequent coefficient functions $\chi_{s}(\xi )$, $ s=1,2,\dots$, in
(\ref{eq:2.4}) satisfies an inhomogeneous Bessel equation. This
suggests that instead of (\ref{eq:2.4}), we might try the formal
series solution
\begin{equation}\label{eq:2.16}
P_{n}(x)=Z_{\nu }(N\zeta)\sum_{s=0}^{\infty }\frac{A_{s}(\zeta )}{N^s}
+Z_{\nu +1}(N\zeta)\sum_{s=0}^{\infty }\frac{B_{s}(\zeta )}{N^s}
\end{equation}
motivated from the differential equation theory,
where we have set $\zeta(t)=[\eta(t)]^{\frac12}$ to simplify the notations.
Here $\zeta(t)>0$ for $t>0$ and $\i\zeta(t)<0$ for $t<0$.
In (\ref{eq:2.16}), $Z_{\nu}(\cdot )$ could be any solution of Bessel's equation

\begin{equation}\label{eq:2.17}
y''+\frac{1}{x}y'+\left(1-\frac{\nu^2}{x^2}\right)y=0.
\end{equation}

We state the main result of this paper in the following theorem.

\begin{thm}\label{thm:1}
Assume that the coefficients $A_n$ and $B_n$ in the recurrence relation (\ref{eq:1.7}) are real,
and have asymptotic expansions given in (\ref{eq:1.8}) with $\theta\neq0,2$ and $\beta_0=2$.
Let $t_1=0$ be a transition point defined in (\ref{eq:1.11}),
and the function $\zeta(t)$ be given as in (\ref{eq:4.9}) and (\ref{eq:4.10}).
Then, for each nonnegative integer $p$,
(\ref{eq:1.7}) has a pair of linearly independent solutions,
\begin{equation}\label{eq:2.18}
\begin{aligned} P_n(N^\theta t)=&
N^\frac12\left(\frac{4\zeta^2}{4-(\alpha_0't+\beta_0')^2}\right)^\frac14
\left[J_\nu(N\zeta)\sum_{s=0}^p\frac{
\widetilde{A}_s(\zeta)}{N^s}\right.\\
&\hspace{4.6cm}+\left.J_{\nu+1}(N\zeta)
\sum_{s=0}^p\frac{\widetilde{B}_s(\zeta)}{N^s}
+\varepsilon_n^p\right]
\end{aligned}
\end{equation}
and
\begin{equation}\label{eq:2.19}
\begin{aligned} Q_n(N^\theta t)=&
N^\frac12\left(\frac{4\zeta^2}{4-(\alpha_0't+\beta_0')^2}\right)^\frac14
\left[W_\nu(N\zeta)\sum_{s=0}^p\frac{\widetilde{A}_s(
\zeta)}{N^s}\right.\\
&\hspace{4.6cm}+\left.W_{\nu+1}(N\zeta)
\sum_{s=0}^p\frac{\widetilde{B}_s(\zeta)}{N^s}
+\delta_n^p\right],
\end{aligned}
\end{equation}
where $W_\nu(x):=Y_\nu(x)-\i J_\nu(x)$, $N=n+\tau_0$, $\tau_0=-\alpha_1/(\alpha_0\theta)$ and $\nu$ is given in (\ref{eq:2.15}). The error terms satisfy
\begin{equation}\label{eq:2.20}
|\varepsilon_n^p|\leq
\frac{M_p}{N^{p+1}}
\big[|J_\nu(N\zeta)|+|J_{\nu+1}(N\zeta)|\big]
\end{equation}
and
\begin{equation}\label{eq:2.21}
|\delta_n^p|\leq
\frac{M_p}{N^{p+1}}
\big[|W_\nu(N\zeta)|+|W_{\nu+1}(N\zeta)|\big]
\end{equation}
for $-\infty<t\leq t_2-\sigma$, $M_p$ being a positive constant and $\sigma$ being an arbitrary  positive constant.
The coefficients $\widetilde{A}_0(\zeta)=1$ and $\widetilde{B}_0(\zeta)=0$
and other $\widetilde{A}_s(\zeta)$ and $\widetilde{B}_s(\zeta)$
can be determined successively from their predecessors;
see (\ref{eq:4.26}) and (\ref{eq:4.27}).
\end{thm}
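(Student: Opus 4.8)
The plan is to follow the classical Olver-style scheme for uniform asymptotic expansions, adapted to difference equations as in Wang and Wong~\cite{ww2,ww3}. First I would make the \emph{Liouville--Green type transformation} precise: define $\eta(t)$ (equivalently $\zeta(t)=\sqrt{\eta(t)}$) so that the formal substitution of~(\ref{eq:2.16}) into~(\ref{eq:1.7}) produces, at leading order, Bessel's equation~(\ref{eq:2.17}) of order $\nu$. This forces the differential equation for $\eta$ that appears in~(\ref{eq:4.9})--(\ref{eq:4.10}); the branch choices ($\zeta>0$ for $t>0$, $\i\zeta<0$ for $t<0$) and the prefactor $\bigl(4\zeta^2/(4-(\alpha_0't+\beta_0')^2)\bigr)^{1/4}$ are pinned down by requiring that $\eta'(0)\neq0$ and that the map $t\mapsto\zeta$ be analytic and monotone across the transition point $t_1=0$. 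One must check that this prefactor is bounded and bounded away from zero on $(-\infty,t_2-\sigma]$, which is where the restriction to $t\leq t_2-\sigma$ enters (at $t_2$ the other pair of characteristic roots coalesces and the prefactor degenerates).

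Next I would derive the recurrence for the coefficient functions. Substituting~(\ref{eq:2.16}) into the recurrence, expanding $Q_\pm(\xi)$ and $\Psi(\xi)$ in powers of $1/N$ as in~(\ref{eq:2.8}) and~(\ref{eq:2.12}), and using the contiguous relations for $Z_\nu$ to rewrite every $Z_{\nu\pm1},Z_{\nu\pm2}$ in terms of $Z_\nu$ and $Z_{\nu+1}$, one collects powers of $N^{-s}$. The coefficient of $Z_\nu$ gives a first-order (differential-difference) relation determining $\widetilde A_s$ from $\widetilde A_{s-1},\widetilde B_{s-1},\dots$, and the coefficient of $Z_{\nu+1}$ determines $\widetilde B_s$; these are equations~(\ref{eq:4.26}) and~(\ref{eq:4.27}), and they are solvable by quadrature with the normalization $\widetilde A_0=1$, $\widetilde B_0=0$. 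I would verify that the $\widetilde A_s,\widetilde B_s$ so obtained are analytic at $\zeta=0$ (no spurious singularity from the $1/\zeta$ factors in the recursion), using the fact that $\nu$ was chosen precisely to cancel the indicial mismatch.

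The substantive part is the \emph{error analysis}. With $P_n^{(p)}$ denoting the truncated sum in~(\ref{eq:2.18}), define $\varepsilon_n^p$ by that equation and derive the inhomogeneous recurrence it satisfies; the inhomogeneity is $O(N^{-p-1})$ times combinations of $J_\nu(N\zeta_\bullet),J_{\nu+1}(N\zeta_\bullet)$ at the three points $t,t_\pm$. I would convert this into a summation (Volterra-type) equation by constructing a discrete analogue of variation of parameters: use the two "known" solutions $J_\nu$-type and $W_\nu$-type of the truncated system, form the analogue of the Green's function via their Casoratian (which is asymptotically constant, up to controllable errors), and then bound the resulting sum by a discrete Gronwall argument. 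The key technical inputs are uniform bounds of the type $|J_{\nu+1}(N\zeta)|/|J_\nu(N\zeta)|$, $|Y$-type $/J$-type$|$, and the behaviour of the weight function $N^{1/2}(\cdots)^{1/4}$ for $t$ ranging over an \emph{infinite} interval, including the oscillatory region $t>0$, the turning-point region $t\approx0$, and the region $t<0$ where $N\zeta$ is imaginary and the relevant modulus grows; these are exactly the Bessel-function estimates catalogued in Olver~\cite{Olver}. Carrying this through simultaneously for $Q_n$ with $W_\nu=Y_\nu-\i J_\nu$ in place of $J_\nu$ gives~(\ref{eq:2.19}) and~(\ref{eq:2.21}); linear independence follows from the nonvanishing of the leading Casoratian of the pair.

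The main obstacle I anticipate is the uniformity of the error bound across the infinite interval $(-\infty,t_2-\sigma]$: one must patch together the oscillatory estimates for large positive $N\zeta$, the Airy/Bessel transition estimates near $\zeta=0$, and the exponential-type estimates for $t\to-\infty$ (where $\zeta$ is purely imaginary and $|\zeta|\to\infty$), all while keeping track of how the prefactor and the coefficient functions $\widetilde A_s,\widetilde B_s$ behave at the two ends. Ensuring the Gronwall constant $M_p$ does not blow up as $t\to t_2-\sigma$ or as $t\to-\infty$ is the delicate point, and it is where the precise choice of $\tau_0$, the hypothesis $\beta_1=0$, and the structure of $\zeta(t)$ in~(\ref{eq:4.9})--(\ref{eq:4.10}) must all be used.
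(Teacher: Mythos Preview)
Your outline is essentially the paper's proof: the determination of $\zeta$ (Section~4), the recursive construction of $\widetilde A_s,\widetilde B_s$ via (\ref{eq:4.26})--(\ref{eq:4.27}), and the error control by a discrete variation-of-parameters summation formula (\ref{eq:6.26}) solved by successive approximation using the near-constant Casoratian (\ref{eq:6.33}) all match what the paper does.

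The one step you underweight is the passage from $Z_\nu\bigl((N\pm1)\zeta(t_\pm)\bigr)$ back to the basis $\{Z_\nu(N\zeta),\,Z_{\nu+1}(N\zeta)\}$. This is not handled by contiguous relations in the order: the \emph{argument} shift $(N\pm1)\zeta(t_\pm)-N\zeta$ is $O(1)$, not $o(1)$ (see (\ref{eq:3.24})--(\ref{eq:3.28}), where $u_0=1-\theta t\,\zeta'/\zeta$), so a naive Taylor expansion in $1/N$ does not directly organize itself. The paper isolates this in Lemma~\ref{lem:3.1}, introducing an auxiliary variable $u$, deriving the second-order ODE (\ref{eq:3.8}) for $w=(1+u/N)^{1/2}Z_\nu((N+u)\zeta)$, and solving the resulting equations for the connection coefficients $\widetilde G_s,\widetilde H_s$ explicitly in terms of $\cos(\zeta u),\sin(\zeta u)$ as in (\ref{eq:3.20})--(\ref{eq:3.23}); the series for $G_\pm,H_\pm,L_\pm,K_\pm$ are then shown to \emph{converge}, not merely to be asymptotic. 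A second ingredient you pass over is Lemma~\ref{lem:5.1}: the linear-in-$|t|$ growth of $G_s,H_s,K_s,L_s$ as $t\to-\infty$ is obtained by solving (\ref{eq:5.4})--(\ref{eq:5.5}) for $G,H$ via the Wronskian and inserting the large-argument asymptotics (\ref{eq:5.11})--(\ref{eq:5.12}) of the modified Bessel functions. This is precisely what feeds into Lemma~\ref{lem:5.2} to make the $\widetilde A_s,\widetilde B_s$ bounded on the full half-line, closing the uniformity issue you correctly flag as the main obstacle.
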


\begin{rem}\label{rem:1}
For the case
$\alpha_0>0$ and $\beta_0=-2$, the two transition points $t_1$ and $t_2$ again satisfy $t_1=0<t_2$.
Put $\mathcal{P}_n(x):=(-1)^nP_n(x)$. Theorem~\ref{thm:1} then applies to $\mathcal{P}_n(x)$.
If the two transition points satisfy $t_2<0=t_1$, we set $x:=-N^\theta t$, instead of $x:=N^\theta t$.
Theorem~\ref{thm:1} again holds with $P_n(N^\theta t)$ replaced by $P_n(-N^\theta t)$
(in the case $\alpha_0>0$, $\beta_0=2$) or $(-1)^n P_n(-N^\theta t)$ (in the case $\alpha_0<0$, $\beta_0=-2$).
\end{rem}

\section{A preliminary lemma}

As we have noted in above sections,  $x:=N^\theta t$ is fixed in  equation (\ref{eq:1.7}).
Hence, as $N$ being replaced by $N\pm1$, the two functions
$Z_\nu(N \zeta)$ and $Z_{\nu+1}(N \zeta)$ in (\ref{eq:2.16}) automatically
change to $Z_\nu\left[(N\pm1) \zeta(t_\pm)\right]$ and $Z_{\nu+1} \left[(N\pm1) \zeta(t_\pm)\right]$,
where $t_\pm$ are given in (\ref{eq:2.5}).
 The following lemma plays a crucial role in the derivation of the formal series solution in (\ref{eq:2.16}).

\begin{lem}\label{lem:3.1}
Let $Z_{\nu}(x)$ be any solution of Bessel's equation
\begin{equation} \label{eq:3.1}
y''+\frac{1}{x}y'+\left(1-\frac{\nu^{2}}{x^{2}}\right)
=0.
\end{equation}
Then, we have
\begin{equation} \label{eq:3.2}
\begin{aligned}
Z_{\nu }\left\{( N\pm 1)\zeta(t_\pm) \right\}
=Z_\nu(N\zeta)  G_\pm\Big(\zeta,\frac{1}{N}\Big)
+  Z_{\nu+1}(N\zeta)  H_\pm\Big(\zeta,\frac{1}{N}\Big)
\end{aligned}
\end{equation}
and
\begin{equation} \label{eq:3.3}
\begin{aligned}
Z_{\nu+1}\left\{(N\pm1)\zeta(t_\pm)\right\}
=  Z_\nu(N\zeta)  L_\pm\Big(\zeta,\frac{1}{N}\Big)
+Z_{\nu+1}(N\zeta)  K_\pm\Big(\zeta,\frac{1}{N}\Big),
\end{aligned}
\end{equation}
where
\begin{equation} \label{eq:3.4}
G_\pm\Big(\zeta,\frac{1}{N}\Big)\sim
\sum_{s=0}^{\infty}(\pm1)^s\frac{  G_{s}(\zeta)}{N ^{s}},\qquad
H_\pm\Big(\zeta,\frac{1}{N}\Big)\sim
\sum_{s=0}^{ \infty}(\pm1)^{s-1}\frac{  H_{s}(\zeta)}{N ^{s}}
\end{equation}
and
\begin{equation}\label{eq:3.5}
L_\pm\Big(\zeta,\frac{1}{N}\Big)\sim
\sum_{s=0}^{\infty}(\pm1)^{s-1}\frac{L_{s}(\zeta)}{N ^{s}},\qquad
K_\pm\Big(\zeta,\frac{1}{N}\Big)\sim
\sum_{s=0}^{ \infty}(\pm1)^s\frac{  K_{s}(\zeta)}{N ^{s}},
\end{equation}
the expansions being uniformly valid with respect to bounded $t$.
\end{lem}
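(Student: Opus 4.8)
The plan is to treat $\big(Z_\nu(x),Z_{\nu+1}(x)\big)$ as a moving frame for Bessel's equation. From the contiguous relations $Z_\nu'(x)=\tfrac{\nu}{x}Z_\nu(x)-Z_{\nu+1}(x)$ and $Z_{\nu+1}'(x)=Z_\nu(x)-\tfrac{\nu+1}{x}Z_{\nu+1}(x)$ (see \cite[\S 10.6]{NIST}), an immediate induction on $k$ shows
\begin{equation*}
Z_\nu^{(k)}(x)=U_k\!\Big(\tfrac1x\Big)Z_\nu(x)+V_k\!\Big(\tfrac1x\Big)Z_{\nu+1}(x),\qquad
Z_{\nu+1}^{(k)}(x)=\widetilde U_k\!\Big(\tfrac1x\Big)Z_\nu(x)+\widetilde V_k\!\Big(\tfrac1x\Big)Z_{\nu+1}(x),
\end{equation*}
where $U_k,V_k,\widetilde U_k,\widetilde V_k$ are polynomials with explicit $\nu$-dependent constant coefficients, generated by a two-term recursion in $k$. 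Putting $h_\pm:=(N\pm1)\zeta(t_\pm)-N\zeta$ and Taylor-expanding $Z_\nu,Z_{\nu+1}$ about $x=N\zeta$ gives (\ref{eq:3.2})--(\ref{eq:3.3}) with $G_\pm=\sum_{k\ge0}\tfrac{h_\pm^k}{k!}U_k(1/(N\zeta))$, $H_\pm=\sum_{k\ge0}\tfrac{h_\pm^k}{k!}V_k(1/(N\zeta))$, and $L_\pm,K_\pm$ built the same way from $\widetilde U_k,\widetilde V_k$. (Equivalently, evaluating (\ref{eq:3.2})--(\ref{eq:3.3}) at $Z=J$ and $Z=Y$ identifies the coefficient matrix as $M\big((N\pm1)\zeta(t_\pm)\big)M(N\zeta)^{-1}$, where $M(y):=\left(\begin{smallmatrix}J_\nu(y)&Y_\nu(y)\\ J_{\nu+1}(y)&Y_{\nu+1}(y)\end{smallmatrix}\right)$ has $\det M(y)=-2/(\pi y)\neq0$; in particular the coefficient functions are unique and well defined.)

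I would then expand the increment in powers of $1/N$: substituting the binomial series $t_\pm=(1\pm N^{-1})^{-\theta}t$ into the local expansion of $\zeta$ at $t$ (equivalently of $\eta=\zeta^2$, which is analytic through the transition point) yields, uniformly for $t$ in a fixed bounded interval,
\begin{equation*}
h_\pm\sim\sum_{s=0}^{\infty}(\pm1)^{s+1}\,\omega_s(\zeta)\,N^{-s}.
\end{equation*}
The sign $(\pm1)^{s+1}$ records that each power of $1/N$ entering through $t_\pm-t$, plus the single extra one from the prefactor $N\pm1$, carries a factor $\pm1$. Moreover $\omega_s(\zeta)=\O{\zeta}$ as $t\to0$, since every term of $h_\pm$ contains either $\zeta(t)$ or a factor $t^{j}\zeta^{(j)}(t)$ with $j\ge1$, all of which are $\O{\zeta}$ at $t_1=0$; this is what lets the coefficient functions below remain regular at the transition point.

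The sign patterns (\ref{eq:3.4})--(\ref{eq:3.5}) now drop out of a parity count. The same induction shows that $U_k$ and $\widetilde V_k$ carry only powers $(1/x)^l$ with $l\equiv k\pmod 2$, while $V_k$ and $\widetilde U_k$ carry only those with $l\equiv k+1\pmod 2$. Hence a contribution to $G_\pm$ of order $N^{-s}$, with $s=m+l$ (the $N^{-m}$ from $h_\pm^k$, the $N^{-l}$ from $U_k(1/(N\zeta))$), carries the sign $(\pm1)^{k+m}$; since $l\equiv k\pmod 2$ this equals $(\pm1)^{m+l}=(\pm1)^s$, independent of $k$, so $G_\pm\sim\sum_s(\pm1)^sG_s(\zeta)N^{-s}$. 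The same bookkeeping with $V_k$ (where $l\equiv k+1$) gives the exponent $(\pm1)^{s-1}$ for $H_\pm$, and with $\widetilde U_k,\widetilde V_k$ the exponents $(\pm1)^{s-1}$ for $L_\pm$ and $(\pm1)^s$ for $K_\pm$. (One verifies in passing $G_0=\cos\omega_0(\zeta)$, $H_0=-\sin\omega_0(\zeta)$; and each $G_s,H_s,L_s,K_s$ is rational in $\zeta$, regular at $\zeta=0$ because $\omega_s=\O{\zeta}$ while $\deg U_k\le k$, etc.)

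The substantive part is the justification of these rearrangements, since the increment does not tend to zero: for $t$ bounded away from $t_1=0$ one has $h_\pm=\pm\big(\zeta(t)-\theta t\zeta'(t)\big)+\O{N^{-1}}=\O{1}$, so $G_\pm,\dots$ are genuine infinite series. Convergence itself is not an issue --- their sums are the Bessel values displayed in the matrix identity above --- but to reorganize them into asymptotic series in $1/N$ one needs the uniform estimate
\begin{equation*}
\frac{h_\pm}{N\zeta}=\Big(1\pm\frac1N\Big)\sqrt{\frac{\eta(t_\pm)}{\eta(t)}}-1=\O{N^{-1}},\qquad t\ \text{in the fixed bounded interval},
\end{equation*}
which follows from $\eta(s)/s$ being analytic and nonvanishing near $s=0$ and from $(1\pm N^{-1})^{-\theta}=1+\O{N^{-1}}$ uniformly. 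This keeps the shift $h_\pm$ small relative to $N\zeta$ (in particular inside the disc of convergence, $|h_\pm|<|N\zeta|$, the distance to the branch point at the origin), so one may feed in the uniform large-argument expansions of $J_\nu,Y_\nu$ where $N\zeta\to\infty$ and their power series at the origin where $N\zeta=\O{1}$, the two regimes overlapping; at $t=0$ the identity is trivial, and for $t<0$, where $\zeta$ is purely imaginary, the same works once the Bessel functions are read on the branch for which the segment joining $N\zeta$ and $(N\pm1)\zeta(t_\pm)$ avoids the origin. I expect this uniform bookkeeping, carried out in the spirit of Olver~\cite{Olver} and Wang--Wong~\cite{ww3}, to be the real work of the proof.
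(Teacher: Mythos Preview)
Your argument is correct in outline and takes a genuinely different route from the paper. You Taylor-expand $Z_\nu$ directly about $N\zeta$, using the fact that each derivative $Z_\nu^{(k)}$ is a polynomial in $1/x$ against the frame $(Z_\nu,Z_{\nu+1})$, and then read off the sign patterns (\ref{eq:3.4})--(\ref{eq:3.5}) from the parity of those polynomials; this parity bookkeeping is clean and more structural than the paper's, which simply asserts the relations $G_s^-=(-1)^sG_s^+$, etc., ``by mathematical induction.'' The paper instead introduces the auxiliary function $w(N,\zeta,u)=(1+u/N)^{1/2}Z_\nu[(N+u)\zeta]$, whose point is that the Liouville-type prefactor kills the first-derivative term and leaves the simple equation $\partial_u^2 w=\big((\nu^2-\tfrac14)/(N+u)^2-\zeta^2\big)w$. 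Expanding $w=\widetilde G\,Z_\nu(N\zeta)+\widetilde H\,Z_{\nu+1}(N\zeta)$ and then $\widetilde G\sim\sum\widetilde G_s/N^s$ produces a recursive tower of \emph{inhomogeneous harmonic oscillators} in $u$, solvable in closed form ($\widetilde G_0=\cos(\zeta u)$, $\widetilde H_0=-\sin(\zeta u)$, and integral formulas for $s\ge2$), and from these the paper obtains the explicit bounds $|\widetilde G_s|\le(|\nu|+1)^s|u|^s|\widetilde G_0|$ that make the series genuinely \emph{convergent}, not merely asymptotic, for bounded $u$. Only afterwards is $u$ specialised to $u_\pm=(N\pm1)\zeta(t_\pm)/\zeta-N$ and re-expanded in $1/N$. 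What this buys is exactly the ``real work'' you flag at the end: your rearrangement of the double sum $\sum_k h_\pm^k U_k(1/(N\zeta))/k!$ into powers of $1/N$ needs uniform control on the growth of the polynomial coefficients $U_k$ as $k\to\infty$, which you do not supply, whereas the paper's ODE-in-$u$ route delivers the needed bounds for free. A minor point: the coefficient functions are not literally ``rational in $\zeta$'' (already $G_0=\cos(\zeta-\theta t\zeta')$), though your intended conclusion---regularity at $\zeta=0$ because $\omega_s=\O{\zeta}$---is correct.
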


\begin{proof} Let
\begin{equation}\label{eq:3.6}
w(N,\zeta,u) =\left( 1+\frac{u}{N }\right) ^{\frac12}Z_{\nu}\left[(N+u)\zeta\right].
\end{equation}
A straightforward calculation gives
\begin{equation}\label{eq:3.7}
\frac{\partial w}{\partial
u}=\frac{\frac{1}{2}+\nu}{N}\left(1+\frac{u}{N}\right)^{-\frac{1}{2}}Z_\nu
-\left(1+\frac{u}{N}\right)^{\frac{1}{2}}\zeta Z_{\nu+1}
\end{equation}
and
\begin{equation}\label{eq:3.8}
\frac{\partial^2 w}{\partial u^2}
=\left(\frac{\nu^2-\frac{1}{4}}{(N+u)^2}-\zeta^2\right)w,
\end{equation}
where we have made use of the difference-differential relations of
Bessel functions
$$
Z_\nu'(x)=\frac{\nu}{x}Z_\nu(x)-Z_{\nu+1}(x)
$$
and
$$
Z_{\nu+1}'(x)=Z_\nu(x)-\frac{\nu+1}{x}Z_{\nu+1}(x);
$$
see \cite[\S 10.6]{NIST}. The Taylor expansion of $w(N,\zeta,u)$ at $u=0$ is
\begin{equation}\label{eq:3.9}
w(N,\zeta,u) =w(N,\zeta,0) +u\frac{\partial w}{\partial u}(N,\zeta,0) +\frac{u^{2}}{2!}\frac{\partial ^{2}w}{\partial u^{2}}(N,\zeta,0) +\cdots.
\end{equation}
In view of (\ref{eq:3.6})-(\ref{eq:3.8}), the above series can be rearranged as
\begin{equation}\label{eq:3.10}
\left( 1+\frac{u}{N }\right)
^{\frac{1}{2}}Z_{\nu}\left[(N+u)\zeta\right]=Z_\nu(N\zeta)\widetilde{G}(N;\zeta,u)
+Z_{\nu+1}(N\zeta)\widetilde{H}(N;\zeta,u),
\end{equation}
where
\begin{equation}\label{eq:3.11}
\widetilde{G}(N;\zeta,0)
=1,\qquad\widetilde{H}(N;\zeta,0)=0,
\end{equation}
\begin{equation}\label{eq:3.12}
\frac{
\partial \widetilde{G}}{\partial u}(N;\zeta,0) =\frac{\nu+\frac{1}{2}}{N}\quad \text{and}\quad\frac{
\partial \widetilde{H}}{\partial u}(N;\zeta,0) =-\zeta.
\end{equation}
Differentiating (\ref{eq:3.10}) with respect to $u$ twice yields
\begin{equation}\label{eq:3.13}
\frac{\partial ^{2}\widetilde{G}}{\partial u^{2}}
=\left(\frac{\nu^2-\frac{1}{4}}{(N+u)^2}-\zeta^2\right)\widetilde{G},\quad
\widetilde{G}|_{u=0}=1,\quad
\left.\frac{\partial\widetilde{G}}{\partial
u}\right|_{u=0}=\frac{\frac{1}{2}+\nu}{N}
\end{equation}
and
\begin{equation}\label{eq:3.14}
\frac{\partial ^{2}\widetilde{H}}{\partial u^{2}}
=\left(\frac{\nu^2-\frac{1}{4}}{(N+u)^2}-\zeta^2\right)\widetilde{H},\quad
\widetilde{H}|_{u=0}=0,\quad\left.\frac{\partial\widetilde{H}}{\partial
u}\right|_{u=0}=-\zeta.
\end{equation}
We try formal solutions to (\ref{eq:3.13}) and (\ref{eq:3.14}) of the following forms
\begin{equation}\label{eq:3.15}
\widetilde{G}(N;\zeta,u) \sim \sum_{s=0}^{\infty
}\frac{\widetilde{ G}_{s}(\zeta,u) }{N ^{s}}\quad\text{ and
}\quad\widetilde{H}(N; \zeta,u) \sim \sum_{s=0}^{\infty
}\frac{\widetilde{H}_{s}(\zeta,u)}{N ^{s}},
\end{equation}
where the coefficients $\widetilde{G}_s$ can be determined recursively by the equations
\begin{equation}\label{eq:3.16}
\frac{\partial^2 \widetilde G_0}{\partial u^2}=-\zeta^2\widetilde G_0,
\quad \widetilde G_0(0,\zeta)=1,\quad \frac{\partial \widetilde G_0}{\partial u}(0,\zeta)=0,
\end{equation}
\begin{equation}\label{eq:3.17}
\frac{\partial^2 \widetilde G_1}{\partial u^2}=-\zeta^2 \widetilde G_1,
\quad \widetilde G_1(0,\zeta)=0,\quad \frac{\partial \widetilde G_1}{\partial u}(0,\zeta)=\frac{1}{2}+\nu,
\end{equation}
\begin{equation}\label{eq:3.18}
\frac{\partial^2 \widetilde G_s }{\partial u^2}=-\zeta^2 \widetilde G_s+
\left(\nu^2-\frac{1}{4}\right)\sum_{j=2}^{s}(j-1)(-u)^{j-2}\widetilde G_{s-j}, \qquad s \ge 2,
\end{equation}
and
\begin{equation}\label{eq:3.19}
\widetilde G_s(0,\zeta)=0,\quad \frac{\partial \widetilde G_s}{\partial u}(0,\zeta)=0,\qquad s\geq 2.
\end{equation}
The solutions of (\ref{eq:3.16})--(\ref{eq:3.19}) are
\begin{equation}\label{eq:3.20}
\widetilde{G}_0=\cos (\zeta u),\qquad
\widetilde{G}_1=\frac{\frac{1}{2}+\nu}{\zeta}\sin(\zeta u)
\end{equation}
and
\begin{equation}\label{eq:3.21}
\widetilde{G}_s=
\frac{\nu^2-\frac{1}{4}}{\zeta}\int_{0}^{u}\Big(
\sum_{j=2}^{s}(-1)^j(j-1)\phi ^{j-2} \widetilde{G}_{s-j}\Big)
\sin \big((u-\phi)\zeta\big) d\phi
\end{equation}
for $s\geq2$. Similarly, we also have
\begin{equation}\label{eq:3.22}
\widetilde{H}_0=-\sin(\zeta u),
\qquad\widetilde{H}_1 =0
\end{equation}
and
\begin{equation}\label{eq:3.23}
\widetilde{H}_s=\frac{\nu^2-\frac{1}{4}}{\zeta}\int_{0}^{u}\Big(
\sum_{j=2}^{s}(-1)^j(j-1)\phi ^{j-2} \widetilde{H}_{s-j}\Big)
\sin \big( (u-\phi)\zeta\big) d\phi
\end{equation}
for $s\ge2$.
By induction, we can prove that when $\i\zeta<0$,
$$
|\widetilde G_s(u,\zeta)|\leq(|\nu|+1)^s|u|^s\widetilde G_0(u, \zeta),\quad
|\widetilde H_s(u,\zeta)|\leq(|\nu|+1)^s|u|^s|\widetilde H_0(u,\zeta)|,
$$
and that when $\zeta >0$,
$$ |\widetilde
G_s(u,\zeta)|\leq(|\nu|+1)^s|u|^s,\quad |\widetilde
H_s(u,\zeta)|\leq(|\nu|+1)^s|u|^{s}.
$$
Thus, the formal series in (\ref{eq:3.15}) are uniformly convergent for
any bounded $u$ and sufficiently large $N$.

To show that (\ref{eq:3.2}) follows from (\ref{eq:3.10}), we simply set
$$
(N+u)\zeta=(N\pm1)\zeta(t_\pm)
$$
\textit{i.e.}, we choose
\begin{equation}\label{eq:3.24}
u=u_\pm:=(N\pm1) \frac{\zeta(t_\pm)}{\zeta(t)}-N.
\end{equation}
Furthermore, we let
\begin{equation}\label{eq:3.25}
G_\pm\Big(\zeta,\frac1N\Big)=\left(1+\frac{u_\pm}{N}\right)^{-\frac12}\widetilde{G}(N;\zeta,u_\pm)
\end{equation}
and
\begin{equation}\label{eq:3.26}
H_\pm\Big(\zeta,\frac1N\Big)=\left(1+\frac{u_\pm}{N}\right)^{-\frac12}\widetilde{H}(N;\zeta,u_\pm).
\end{equation}

Recall that $t_\pm$ is given in (\ref{eq:2.5}). Expand $t_\pm$ into power series in $\frac1N$,
and then expand $\zeta(t_\pm)$ into Taylor series at $t$ and rearrange them also into power series in $\frac1N$.
From (\ref{eq:3.24}), it follows that
\begin{equation}\label{eq:3.27}
u_\pm=\sum_{s=0}^\infty (\pm1)^{s+1}\frac{u_s}{N^s},
\end{equation}
where
\begin{equation}\label{eq:3.28}
u_0=1-\theta t\frac{\zeta'(t)}{\zeta(t)},\qquad
u_1=\frac{\theta(\theta-1)t}{2}\frac{\zeta'(t)}{\zeta(t)}
+\frac{\theta^2t^2}{2}\frac{\zeta''(t)}{\zeta(t)}.
\end{equation}
Substituting (\ref{eq:3.27}) into (\ref{eq:3.15}), we obtain
\begin{equation}\label{eq:3.29}
  G_\pm\Big(\zeta,\frac{1}{N}\Big)
=\left(1+\frac{u_\pm}{N}\right)^{-\frac12}\sum_{s=0}^{\infty}\frac{\widetilde{G}_s(\zeta,u_\pm )}{N^{s}}:=\sum_{s=0}^{\infty}\frac{  G_{s}^\pm(\zeta)}{N^{s}},
\end{equation}
\begin{equation}\label{eq:3.30}
H_\pm\Big(\zeta,\frac{1}{N}\Big)
=\left(1+\frac{u_\pm}{N}\right)^{-\frac12}\sum_{s=0}^{\infty}\frac{\widetilde{H}_s(\zeta,u_\pm )}{N^{s}}:=\sum_{s=0}^{\infty}\frac{  H_{s}^\pm(\zeta)}{N^{s}}.
\end{equation}
Simple calculation shows that  the first two terms in the expansions of (\ref{eq:3.29}) and (\ref{eq:3.30}) are, respectively, given by
\begin{equation}\label{eq:3.31}
  G_0^\pm(\zeta) =\cos(\zeta u_0),\quad
G_1^\pm(\zeta) =
\pm\frac{\frac{1}{2}+\nu-\zeta^2u_1}{\zeta}\sin(\zeta u_0)\mp\frac12u_0\cos(\zeta u_0)
\end{equation}
and
\begin{equation}\label{eq:3.32}
H_0^\pm(\zeta) =\mp \sin(\zeta u_0), \qquad
H_1^\pm(\zeta) =-u_1\zeta \cos(\zeta u_0)+\frac{u_0}{2}\sin(\zeta u_0).
\end{equation}

Equation (\ref{eq:3.3}) and the expansions in (\ref{eq:3.5}) can be established in similar manners. Moveover, we have
\begin{equation}\label{eq:3.33}
K_0^\pm(\zeta)=\cos (\zeta u_0), \quad
K_1^\pm(\zeta)=\mp\frac{\frac{1}{2}+\nu+\zeta^2
u_1}{\zeta}\sin(\zeta u_0)\mp\frac{u_0}{2}\cos (\zeta u_0)
\end{equation}
and
\begin{equation}\label{eq:3.34}
L_0^\pm(\zeta)=\pm \sin(\zeta u_0), \qquad
L_1^\pm(\zeta)=u_1\zeta\cos(\zeta u_0)-\frac{u_0}{2}\sin(\zeta u_0).
\end{equation}

By  mathematical induction, one can show that
\begin{equation}\label{eq:3.35}
G_s^-(\zeta)=(-1)^{s}  G_s^+(\zeta),\qquad
H_s^-(\zeta)=(-1)^{s-1}H_s^+(\zeta)
\end{equation}
and
\begin{equation}\label{eq:3.36}
K_s^-(\zeta)=(-1)^{s}  K_s^+(\zeta),\qquad
L_s^-(\zeta)=(-1)^{s-1}L_s^+(\zeta).
\end{equation}
When we consider the case `+', for simplicity we shall drop the subscripts and superscripts in all above equations.
Thus, (\ref{eq:3.35}) and (\ref{eq:3.36}) give
\begin{equation}\label{eq:3.37}
G_\pm\Big(\zeta,\frac{1}{N}\Big)=
G\Big(\zeta,\pm\frac{1}{N}\Big),\qquad
H_\pm\Big(\zeta,\frac{1}{N}\Big)=\pm
H\Big(\zeta,\pm\frac{1}{N}\Big)
\end{equation}
and
\begin{equation}\label{eq:3.38}
  K_\pm\Big(\zeta,\frac{1}{N}\Big)=
K\Big(\zeta,\pm\frac{1}{N}\Big),\qquad
L_\pm\Big(\zeta,\frac{1}{N}\Big)=\pm
L\Big(\zeta,\pm\frac{1}{N}\Big).
\end{equation}
This completes the proof of Lemma~\ref{lem:3.1}.
\end{proof}

\section{Formal asymptotic solutions}

Let $\zeta^2(t)=\eta(t)$ be an increasing function with $\zeta(0)=0$;
see the sentence following (\ref{eq:2.16}).
We try a formal series solution to (\ref{eq:1.7}) of the form
\begin{equation}\label{eq:4.1}
 P_n(N^\theta t)=Z_\nu(N\zeta)\sum_{s=0}^\infty\frac{A_s(\zeta)}{N^s}
 +Z_{\nu+1}(N\zeta)\sum_{s=0}^\infty\frac{B_s(\zeta)}{N^s},
\end{equation}
where $Z_\nu$ is a solution of Bessel's equation. For
convenience, we put
\begin{equation}\label{eq:4.2}
A\Big(\zeta,\frac{1}{N }\Big) :=\sum_{s=0}^{\infty }\frac{A_{s}(\zeta) }{N ^{s}},\qquad
B\Big(\zeta,\frac{1}{N }\Big) :=\sum_{s=0}^{\infty}\frac{B_{s}(\zeta)}{N ^s}
\end{equation}
and
\begin{equation}\label{eq:4.3}
\Psi \Big(t,\frac{1}{N }\Big):=A_nx+B_n
=\sum_{s=0}^{\infty }\frac{\alpha'_{s}t+\beta'_{s}}{N^s};
\end{equation}
cf. (\ref{eq:2.8}). Using (\ref{eq:2.16}) and Lemma~\ref{lem:3.1}, we have
\begin{equation} \label{eq:4.4}
\begin{aligned} P_{n\pm1}(x)=&Z_\nu(N\zeta)
\left\{G\Big(\zeta,\pm\frac{1}{N}\Big)A\Big(\zeta(t_\pm),\frac{1}{N\pm1}\Big)\right.
\\
&\qquad\pm\left.  L\Big(\zeta,\pm\frac{1}{N}\Big)B\Big(\zeta(t_\pm),\frac{1}{N\pm1}\Big)\right\}
\\
&+Z_{\nu+1}(N\zeta)
\left\{K\Big(\zeta,\pm\frac{1}{N}\Big)B\Big(\zeta(t_\pm),\frac{1}{N\pm1}\Big)\right.
\\
&\qquad \pm\left. H\Big(\zeta,\pm\frac{1}{N}\Big)A\Big(\zeta(t_\pm),\frac{1}{N\pm1}\Big)\right\};
\end{aligned}
\end{equation}
see also (\ref{eq:3.37}) and (\ref{eq:3.38}).
Substituting (\ref{eq:4.1})--(\ref{eq:4.4}) into (\ref{eq:1.7}) and
matching the coefficients of $Z_\nu$ and $Z_{\nu+1}$, we get
\begin{equation}\label{eq:4.5}
\begin{aligned}
&G\Big(\zeta,\frac{1}{N}\Big)A\Big[\zeta(t_+),\frac{1}{N+1}\Big]
+ L\Big(\zeta,\frac{1}{N}\Big)B\Big[\zeta(t_+),\frac{1}{N+1}\Big]\\
+&G\Big(\zeta,\frac{-1}{N}\Big)A\Big[\zeta(t_-),\frac{1}{N-1}\Big]
- L\Big(\zeta,\frac{-1}{N}\Big)B\Big[\zeta(t_-),\frac{1}{N-1}\Big]\\
-&\Psi\Big(t,\frac{1}{N}\Big)A\Big(\zeta,\frac{1}{N}\Big)=0
\end{aligned}
\end{equation}
and
\begin{equation}\label{eq:4.6}
\begin{aligned}
& H\Big(\zeta,\frac{1}{N}\Big)A\Big[\zeta(t_+),\frac{1}{N+1}\Big]
+K\Big(\zeta,\frac{1}{N}\Big)B\Big[\zeta(t_+),\frac{1}{N+1}\Big]\\
-& H\Big(\zeta,\frac{-1}{N}\Big)A\Big[\zeta(t_-),\frac{1}{N-1}\Big]
+K\Big(\zeta,\frac{-1}{N}\Big)B\Big[\zeta(t_-),\frac{1}{N-1}\Big]\\
-&\Psi\Big(t,\frac{1}{N}\Big)B\Big(\zeta,\frac{1}{N}\Big)=0.
\end{aligned}
\end{equation}
Letting $N\to\infty $ in (\ref{eq:4.5}) and (\ref{eq:4.6}), we obtain
\begin{equation}\label{eq:4.7}
G_0(\zeta)=K_0(\zeta)=\frac{\alpha'_0t+\beta'_0}{2},
\end{equation}
which combined with (\ref{eq:3.28}) and (\ref{eq:3.31}) yields
\begin{equation}\label{eq:4.8}
\zeta(t)-\theta t \zeta'(t)
=\pm\arccos\left(\frac{\alpha'_0t+\beta'_0}{2}\right),
\end{equation}
where the $\arccos$  function is analytically continued to $\mathbb C\setminus\{(0,-\i\infty)\cup(t_2,\infty)\}$ so that for $t<0$,
\begin{equation*}
\arccos \left(\frac{\alpha _{0}' t+\beta _{0}'}{2}\right)
=\frac{1}{\i}\log \left[ \left( \frac{\alpha _{0}'t+\beta_{0}'}{2}\right) +\i\sqrt{1-\left( \frac{\alpha_{0}'t+\beta_{0}'}{2}\right) ^{2}}\right],
\end{equation*}
where $t_2$ is given in (\ref{eq:1.11}).
Recall  that $\alpha_0'<0$ and $\beta_0'=2$; see the lines between (\ref{eq:2.2}) and (\ref{eq:2.3}).
It is straightforward to verify by direct differentiation that the function
\begin{equation}\label{eq:4.9}
\pm\zeta(t)
=\arccos \left(\frac{\alpha_{0}' t+\beta_{0}'}{2}\right)
+\alpha _{0}' t^{\frac{1}{\theta}}\int_{a}^{t}\frac{ \phi ^{-\frac{1}{\theta}}}{\sqrt{4-(\alpha _{0}' \phi +\beta_{0}' )^{2}}}d\phi
\end{equation}
is a solution to (\ref{eq:4.8}) for $t<t_{2}-\sigma$, where `$+$' sign is taken for $\theta<2$ and `$-$' sign is taken for $\theta>2$.
The choice of the lower limit of integration is just for the purpose of convergence of the integral.
For instance, we can chose $a=0$ if $\theta<0$ or $\theta>2$ and
\begin{equation}\label{eq:4.10}
 a=\left\{\begin{aligned}&-\infty,\quad t<0\\&t_2,\quad 0\leq t<t_2,
 \end{aligned}\right.\qquad\text{if $0<\theta<2$}.
\end{equation}
With this choice, it can also be shown that $\zeta(0)=0$.

Equating coefficients of like powers of $\frac1N$ in (\ref{eq:4.5}) and
(\ref{eq:4.6}) to zero, we obtain
\begin{equation}\label{eq:4.11}
\begin{aligned} &\sum_{s\leq p,\hspace{0.5mm} s\hspace{1mm}\text{even}}\hspace{1mm}
\sum_{i+m\leq s}G_i(\zeta)
\binom{-p+s}{s-i-m}\sum_{l=0}^{m}\frac{D^{l}A_{p-s}(\zeta)}{l!}t^{l}\gamma _{l,m}\\
+&\sum_{s\leq p,\hspace{0.5mm} s\hspace{1mm}\text{odd}}\hspace{1mm}
\sum_{i+m\leq s}L_i(\zeta)
\binom{-p+s}{s-i-m}\sum_{l=0}^{m}\frac{D^{l}B_{p-s}(\zeta)}{l!}t^{l}\gamma_{l,m}\\
-&\frac{1}{2}\sum_{s\leq p}\left(\alpha'_st+\beta'_s\right)\ A_{p-s}(\zeta)
=0
\end{aligned}
\end{equation}
and
\begin{equation}  \label{eq:4.12}
\begin{aligned}
&\sum_{s\leq p,\hspace{0.5mms\hspace{1mm}\text{even}}}\hspace{1mm} \sum_{i+m\leq s}K_i(\zeta)
\binom{-p+s}{s-i-m}\sum_{l=0}^{m}\frac{D^{l}B_{p-s}(\zeta)}{l!}t^{l}\gamma _{l,m}\\
+&\sum_{s\leq p,\hspace{0.5mm}s\hspace{1mm}\text{odd}} \hspace{1mm} \sum_{i+m\leq s}H_i(\zeta)
\binom{-p+s}{s-i-m}
\sum_{l=0}^{m}\frac{D^{l}A_{p-s}(\zeta)}{l!}t^{l}\gamma _{l,m}\\
-&\frac{1}{2}\sum_{s\leq p}\left(\alpha'_st+\beta'_s\right)B_{p-s}(\zeta) =0,
\end{aligned}
\end{equation}
where $D^{j}$\ denotes the $j$-th derivative with respect to $t$, \textit{i.e.},
\begin{equation} \label{eq:4.13}
D^{j}A_{l}(\zeta )=\frac{d^{j}}{dt^{j}}A_{l}(\zeta (t)),\qquad
l=0,1,2,\dots,
\end{equation}
and $\gamma _{l,m}$ is defined by
\begin{equation} \label{eq:4.14}
\sum_{m=0}^{\infty }\frac{\gamma _{l,m}}{N^{m}}:=\left[\left(
1+\frac{1}{N} \right) ^{-\theta }-1\right] ^{l}.
\end{equation}
For convenience, we define
\begin{equation}\label{eq:4.15}
\begin{aligned}
f_{p-1}(t):= &\sum_{2\leq s\leq p}\frac{\alpha_s't+\beta_s'}{2}A_{p-s}(\zeta)  \\
&-\sum_{{2\leq s\leq p} \atop {s\hspace{0.5mm}\text{even}}}
\left[ \sum_{i+m\leq s}\binom{-p+s}{s-i-m}G_{i}(\zeta)\sum_{l=0}^{m}\frac{D^{l}A_{p-s}(\zeta)}{l!}\gamma_{l,m}t^{l}\right]\\
&-\sum_{2\leq s\leq p \atop s\hspace{1mm}\text{odd}}
\left[ \sum_{i+m\leq s}\binom{-p+s}{s-i-m}L_{i}(\zeta)\sum_{l=0}^{m}\frac{D^{l}B_{p-s}(\zeta)}{l!}\gamma _{l,m}t^{l}\right]
\end{aligned}
\end{equation}
and
\begin{equation}\label{eq:4.16}
\begin{aligned}
g_{p-1}(t):= &\sum_{2\leq s\leq p}\frac{\alpha_s't+\beta_s'}{2}B_{p-s}(\zeta)  \\
&-\sum_{2\leq s\leq p\atop s\hspace{1mm}\text{even}}
\left[ \sum_{i+m\leq s}\binom{-p+s}{s-i-m}K_{i}(\zeta)\sum_{l=0}^{m}\frac{D^{l}B_{p-s}(\zeta)}{l!}\gamma_{l,m}t^{l}\right] \\
&-\sum_{2\leq s\leq p\atop s\hspace{1mm}\text{odd}}
\left[ \sum_{i+m\leq s}\binom{-p+s}{s-i-m}H_{i}(\zeta)\sum_{l=0}^{m}\frac{D^{l}A_{p-s}(\zeta)}{l!}\gamma_{l,m}t^{l}\right]
\end{aligned}
\end{equation}
for $p\geq 1$. Clearly, $f_{0}(t)=g_{0}(t)=0$. Note that
$\gamma _{0,0}=1$, $\gamma_{0,m}=0$, $ m=1,2,\dots $, and $\gamma _{1,0}=0,\gamma
_{1,1}=-\theta $. By (\ref{eq:3.31})--(\ref{eq:3.34}) and (\ref{eq:4.7}),  equations (\ref{eq:4.11})
and (\ref{eq:4.12}) can be rewritten as
\begin{equation}\label{eq:4.17}
[(1-p)L_{0}(\zeta)+L_{1}(\zeta)]B_{p-1}(\zeta )-\theta
tL_{0}(\zeta)\frac{d}{dt}B_{p-1}(\zeta)=f_{p-1}(t)
\end{equation}
and
\begin{equation}\label{eq:4.18}
[(1-p)H_{0}(\zeta)+H_{1}(\zeta)]A_{p-1}(\zeta)-\theta
tH_{0}(\zeta)\frac{d}{dt}A_{p-1}(\zeta )=g_{p-1}(t),
\end{equation}
where we have made use of the assumption that $\alpha_1=\beta_1'=0$; cf. (\ref{eq:2.2}) and (\ref{eq:2.3}).
It follows from (\ref{eq:3.31}), (\ref{eq:3.32}) and (\ref{eq:4.7}) that
\begin{equation}\label{eq:4.19}
H_{0}(\zeta )=-\sqrt{\frac{4-(\alpha _{0}'t+\beta_0')^{2}}{4}},
\qquad
\frac{dH_{0}(\zeta)}{dt}
=-\frac{\alpha _0'}{2}\frac{G_{0}(\zeta)}{H_{0}(\zeta)}.
\end{equation}
Differentiating (\ref{eq:4.8}), we get
\begin{equation}\label{eq:4.20}
(1-\theta )\zeta'(t)-\theta t\zeta''(t)=\frac{\alpha _{0}'}{2 H_{0}(\zeta)}.
\end{equation}
Combining (\ref{eq:3.28}), (\ref{eq:3.32}), (\ref{eq:3.34}), (\ref{eq:4.19}) and (\ref{eq:4.20}), we obtain
\begin{equation}\label{eq:4.21}
H_{1}(\zeta)=-\frac{\theta t}{2}\frac{d}{dt}H_{0}(\zeta )-\frac12\left[1-\theta t\frac{\zeta'}{\zeta}\right]H_{0}(\zeta )
\end{equation}
and
\begin{equation} \label{eq:4.22}
L_{1}(\zeta)=-\frac{\theta t}{2}\frac{d}{dt}L_{0}(\zeta )-\frac12\left[1-\theta t\frac{\zeta'}{\zeta}\right]L_{0}(\zeta ).
\end{equation}
Substituting (\ref{eq:4.21}) and (\ref{eq:4.22}) into (\ref{eq:4.17}) and (\ref{eq:4.18}) and
noting that $ L_{0}(\zeta )=-H_{0}(\zeta )$, we have
\begin{equation} \label{eq:4.23}
\left\{ \begin{aligned}
&\frac{d}{dt}[t^{\frac{p-1}{\theta}}\Lambda(t)B_{p-1}]
=\frac{t^{\frac{p-1}{\theta}-1}\Lambda(t)}{\theta H_0(\zeta)}f_{p-1}(t),\\
&\frac{d}{dt}[t^{\frac{p-1}{\theta}}\Lambda(t)A_{p-1}]
=-\frac{t^{\frac{p-1}{\theta}-1}\Lambda(t)}{\theta H_0(\zeta)}g_{p-1}(t),\\
\end{aligned}\right.
\end{equation}
where
\begin{equation}\label{eq:4.24}
\Lambda (t):=t^{\frac1{2\theta}}\left[ \frac{-H_{0}(\zeta)}{\zeta}\right] ^{\frac{ 1}{2}}.
\end{equation}
For $p=1$, since $f_{0}=g_{0}=0$, $\Lambda A_{0}(\zeta)$ and $\Lambda B_{0}(\zeta)$ are constants, we set
\begin{equation}\label{eq:4.25}
\Lambda A_{0}(\zeta)=1,\qquad \Lambda B_{0}=0.
\end{equation}
For $p>1$ and $0<\theta<2$
\begin{equation}\label{eq:4.26}
t^{\frac{p-1}{\theta}}\Lambda(t)B_{p-1}(\zeta)=\int_{0}^{t}\frac{s^{\frac{p-1}{\theta}-1}\Lambda(s)}{\theta  H_0(\zeta(s))}f_{p-1}(s)ds,\quad t<t_{2}-\sigma
\end{equation}
and
\begin{equation}\label{eq:4.27}
t^{\frac{p-1}{\theta}}\Lambda (t)A_{p-1}(\zeta)=-\int_{0}^{t}\frac{s^{\frac{p-1}{\theta}-1}\Lambda(s)}{\theta H_0(\zeta(s))}g_{p-1}(s)ds,\quad t<t_{2}-\sigma.
\end{equation}
Therefore, for each $p>1$, $A_{p}(t)$ and $B_{p}(t)$ can be determined
successively from their predecessors $A_{0}(t)$, $B_{0}(t)$,
$\dots $, $A_{p-1}(t)$ and $ B_{p-1}(t)$. If, in addition, we
have the assumption $\alpha _{2s+1}'=\beta_{2s+1}'=0$ for $s=0,1,\dots $, we can choose the
solutions such that $A_{2s+1}=B_{2s}=0$.

\section{Bounds for coefficients}

\begin{lem}\label{lem:5.1}
Let $\zeta(t)$ be given as in (\ref{eq:4.9}) and (\ref{eq:4.10}), and
let $ G(\zeta,\frac{1}{N})$, $H(\zeta,\frac{1}{N})$,
$L(\zeta,\frac{1}{N})$ and $ K(\zeta,\frac{1}{N})$ be given as in
Lemma~\ref{lem:3.1}. Then there exists a constant $C_s$
independent of $t$ such that
\begin{equation}\label{eq:5.1}
|G_s(\zeta)|\le C_s(|t|+1),\qquad |H_s(\zeta)|\le C_s(|t|+1),
\end{equation}
\begin{equation}\label{eq:5.2}
|K_s(\zeta)|\le C_s(|t|+1),\qquad |L_s(\zeta)|\le C_s(|t|+1)
\end{equation}
for all $t\le t_2-\sigma$, $\sigma>0$.
\end{lem}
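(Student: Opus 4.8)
The plan is to read off the bounds from the recursive construction of $G_s,H_s,K_s,L_s$ inside the proof of Lemma~\ref{lem:3.1}. By (\ref{eq:3.25})--(\ref{eq:3.30}), $G_s(\zeta)$ is the coefficient of $N^{-s}$ in $(1+u_+/N)^{-1/2}\sum_{k\ge0}\widetilde{G}_k(\zeta,u_+)N^{-k}$, with $u_+=\sum_{j\ge0}u_jN^{-j}$ and $u_0,u_1$ as in (\ref{eq:3.28}). Taylor--expanding $\widetilde{G}_k(\zeta,u_+)$ about $u=u_0$ and collecting powers of $1/N$ writes
\begin{equation*}
G_s(\zeta)=\sum c\,\partial_u^{\,r}\widetilde{G}_k(\zeta,u_0)\,u_{j_1}\cdots u_{j_q},
\end{equation*}
a finite sum over tuples with $j_l\ge0$, $k+j_1+\cdots+j_q\le s$, at least $r$ of the $j_l$ being $\ge1$, and $c$ a universal constant; likewise $H_s$ with $\widetilde{G}$ replaced by $\widetilde{H}$, and $K_s,L_s$ with $\widetilde{G}$ replaced by the functions of $Z_{\nu+1}$ attached to (\ref{eq:3.3}). (The constraint ``at least $r$ factors with $j_l\ge1$'' is forced: each of the $r$ $u$--differentiations is generated by one factor of $u_+-u_0=\sum_{i\ge1}u_iN^{-i}$ and so carries with it a $u_i$ with $i\ge1$.) It is therefore enough to bound the $u_j$ and the functions $\partial_u^{\,r}\widetilde{G}_k(\zeta,u_0)$, keeping track of how the $j_l\ge1$ factors swallow the powers of $\zeta$ that $\partial_u^{\,r}$ creates. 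Two identities drive this: since $\zeta u_0=\zeta-\theta t\zeta'$ by (\ref{eq:3.28}), equation (\ref{eq:4.8}) gives $\zeta u_0=\arccos\!\big(\tfrac12(\alpha_0't+\beta_0')\big)$ (continued as in (\ref{eq:4.9})), hence $\cos(\zeta u_0)=G_0(\zeta)=\tfrac12(\alpha_0't+\beta_0')$ and $\sin(\zeta u_0)=-H_0(\zeta)=\tfrac12\sqrt{4-(\alpha_0't+\beta_0')^2}$, in accord with (\ref{eq:3.31}), (\ref{eq:3.32}), (\ref{eq:4.7}), (\ref{eq:4.19}). As $4-(\alpha_0't+\beta_0')^2=(2-\alpha_0't-\beta_0')(2+\alpha_0't+\beta_0')$ is a product of two functions each $\O{|t|+1}$ on $t\le t_2-\sigma$, we get $|\cos(\zeta u_0)|+|\sin(\zeta u_0)|\le C(|t|+1)$ there. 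I take $0<\theta<2$; the ranges $\theta<0,\ \theta>2$ are handled identically after the change of lower limit in (\ref{eq:4.9})--(\ref{eq:4.10}).

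On every bounded subinterval of $(-\infty,t_2-\sigma]$ the bound is a matter of continuity. An induction on (\ref{eq:3.16})--(\ref{eq:3.23}) shows that $\widetilde{G}_k$ and $\widetilde{H}_k$ (and all their $u$--derivatives) are jointly entire in $(\zeta,u)$, even, respectively odd, in $\zeta$; also $\zeta^2=\eta$ is analytic at $t=0$ with $\eta'(0)\ne0$, and $u_0(t)=1-\tfrac{\theta t\eta'(t)}{2\eta(t)}$ together with $u_j(t)=\zeta(t)^{-1}\sum_{k=1}^{j+1}c_{j,k}\,t^k\zeta^{(k)}(t)$ for $j\ge1$ (which follows from (\ref{eq:3.24}) and (\ref{eq:2.5})) possess finite limits at $t=0$. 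Hence each $\partial_u^{\,r}\widetilde{G}_k(\zeta(t),u_0(t))$, each $\partial_u^{\,r}\widetilde{H}_k(\zeta(t),u_0(t))$ and each $u_j(t)$ is continuous on the subinterval --- in particular the $1/\zeta$'s carried by $\widetilde{G}_k,\widetilde{H}_k$ are not genuine singularities at $t=0$ --- so the finitely many terms forming $G_s,\dots,L_s$ are bounded there.

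The decisive regime is $t\to-\infty$, where $\zeta(t)$ is purely imaginary (see the lines just after (\ref{eq:2.16})). From (\ref{eq:4.9}) the integral term remains bounded as $t\to-\infty$, so $\zeta(t)=\arccos\!\big(\tfrac12(\alpha_0't+\beta_0')\big)+\O{1}$; since the argument of $\arccos$ tends to $+\infty$, $|\zeta(t)|\to\infty$ and $u_0=\zeta u_0/\zeta\to1$, while $|\cos(\zeta u_0)|+|\sin(\zeta u_0)|=\O{|t|}$. Differentiating (\ref{eq:4.8}) (equivalently, iterating (\ref{eq:4.20})) and using $|H_0(\zeta)|=\tfrac12\sqrt{(\alpha_0't+\beta_0')^2-4}=\O{|t|}$, one shows by induction that $t^k\zeta^{(k)}(t)=\O{1}$ for all $k\ge1$; consequently $u_0=\O{1}$ and $|u_j(t)|=\O{1/|\zeta(t)|}$ for every $j\ge1$. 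From the second--order relation (\ref{eq:3.13}) and the explicit formulas (\ref{eq:3.20})--(\ref{eq:3.23}) one then gets, for each $r,k$, an estimate $|\partial_u^{\,r}\widetilde{G}_k(\zeta,u_0)|\le C_{r,k}(1+|\zeta|)^{r}\big(|\cos(\zeta u_0)|+|\sin(\zeta u_0)|\big)$ (and likewise for $\widetilde{H}_k$), the $r=0$ case being exactly the bound proved in the proof of Lemma~\ref{lem:3.1} when $\i\zeta<0$. Hence a generic term $c\,\partial_u^{\,r}\widetilde{G}_k(\zeta,u_0)\,u_{j_1}\cdots u_{j_q}$, which contains at least $r$ factors of size $\O{1/|\zeta|}$ and the rest $\O{1}$, is $\O{(1+|\zeta|)^{r}|\zeta|^{-r}|t|}=\O{|t|}$; summing finitely many such terms and combining with the previous paragraph yields $|G_s(\zeta)|,|H_s(\zeta)|,|K_s(\zeta)|,|L_s(\zeta)|\le C_s(|t|+1)$ for $t\le t_2-\sigma$.

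The crux is precisely this last bookkeeping: a $u$--differentiation of $\widetilde{G}_k$ can bring down a factor $\zeta$, so upon evaluation at $u_0$ the $r$--fold derivative threatens a power of $|\zeta|$ that blows up as $t\to-\infty$; this is rescued only because $\partial_u^{\,r}$ is inseparable from $r$ coefficients $u_i$ ($i\ge1$), each $\O{1/|\zeta|}$, and that smallness hinges on the decay estimates $t^k\zeta^{(k)}(t)=\O{1}$ squeezed out of the phase equation (\ref{eq:4.8}). The remaining delicacy --- regularity of all the pieces at $t=0$ --- is disposed of by the parity and entireness of $\widetilde{G}_k,\widetilde{H}_k$ in $\zeta$ and by $\eta'(0)\ne0$.
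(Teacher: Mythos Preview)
Your proof is correct and takes a genuinely different route from the paper. The paper does not work from the recursive formulas for $\widetilde G_k,\widetilde H_k$ at all: instead it inverts (\ref{eq:3.2}) by choosing $Z_\nu=J_\nu$ and $Z_\nu=Y_\nu$, uses the Wronskian $J_{\nu+1}Y_\nu-J_\nu Y_{\nu+1}=2/(\pi x)$ to solve for $G$ and $H$ as explicit bilinears in Bessel functions (equations (\ref{eq:5.6})--(\ref{eq:5.7})), passes to modified Bessel functions for $t<0$, and then feeds in the large-argument asymptotics (\ref{eq:5.11})--(\ref{eq:5.12}) together with a delicate expansion of $\xi'-\xi=(N+1)\zeta(t_+)-N\zeta(t)$ in powers of $1/N$. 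This is considerably more computational, but it delivers the sharper conclusion (\ref{eq:5.22}), namely $G_s(\zeta)=\tfrac12(\alpha_0't+\beta_0')\bigl[\omega_s+\O{1}\bigr]$, which pins down the leading linear term explicitly. Your argument is more elementary---no Bessel asymptotics, only the ODEs (\ref{eq:3.16})--(\ref{eq:3.18}) and the hyperbolic bounds already proved in Lemma~\ref{lem:3.1}---and the key bookkeeping you isolate (each $\partial_u$ is tied to a factor $u_j$ with $j\ge1$, hence of size $\O{1/|\zeta|}$ as $t\to-\infty$, cancelling the $(1+|\zeta|)$ produced by the derivative) is exactly the mechanism that makes the estimate work. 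The inductive claims you leave to the reader ($t^k\zeta^{(k)}=\O{1}$ via repeated differentiation of (\ref{eq:4.8}), and $|\partial_u^r\widetilde G_k(\zeta,u_0)|\le C_{r,k}(1+|\zeta|)^r\bigl(|\cos(\zeta u_0)|+|\sin(\zeta u_0)|\bigr)$ via (\ref{eq:3.18})) are routine, so the argument is complete.
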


\begin{proof} Note that the function $\zeta^2(t)$ in (\ref{eq:4.9}) is $C^\infty$ in
$(-\infty,t_2-\sigma)$, and that each $u_k$,
$k=0,1,\dots$, given in (\ref{eq:3.27}), is continuous at $t=0$. Hence
the expansions in (\ref{eq:3.4}) and (\ref{eq:3.5}) hold uniformly with
respect to $t$ in any compact subinterval of $(-\infty,t_2)$, and the
coefficients $G_s(\zeta)$ and $H_s(\zeta)$ in these expansions are
bounded for all finite $t$ away from $t_2$. Thus, to prove the two
estimates in (\ref{eq:5.1}), it suffices to show that the following two
limits exist:
\begin{equation}\label{eq:5.3}
\lim_{t\to-\infty}\frac{G_s(\zeta)}{t},\qquad\lim_{t\to-\infty}\frac{H_s(\zeta)}{t}.
\end{equation}
Replacing $Z_\nu$ by $J_\nu$ and $Y_\nu$ and choosing positive sign in (\ref{eq:3.2}), we obtain
\begin{equation}\label{eq:5.4}
\begin{aligned}
&J_{\nu }\left\{(N+1)\zeta(t_+)\right\}
=J_\nu(N\zeta)G\Big(\zeta,\frac{1}{N}\Big)+ J_{\nu+1}(N\zeta)H\Big(\zeta,\frac{1}{N}\Big),
\end{aligned}
\end{equation}
and
\begin{equation}\label{eq:5.5}
\begin{aligned}
&Y_{\nu}\left\{(N+1)\zeta(t_+)\right\}
=Y_\nu(N\zeta)G\Big(\zeta,\frac{1}{N}\Big)+ Y_{\nu+1}(N\zeta)H\Big(\zeta,\frac{1}{N}\Big).
\end{aligned}
\end{equation}
Upon solving the last two equations for $G(\zeta,\frac{1}{N})$ and
$H(\zeta,\frac{1}{N})$, we have
\begin{equation}\label{eq:5.6}
\begin{aligned}
G\Big(\zeta,\frac{1}{N}\Big)=&\frac{\pi N\zeta}{2}
\big\{J_{\nu+1}(N\zeta)Y_{\nu }\left[ (N+1)\zeta(t_+)\right]
-Y_{\nu+1}(N\zeta)J_{\nu }\left[ (N+1)\zeta(t_+) \right]\big\}
\end{aligned}
\end{equation}
and
\begin{equation}\label{eq:5.7}
\begin{aligned}
H\Big(\zeta,\frac{1}{N}\Big)=&\frac{\pi N\zeta}{2}
\big\{Y_{\nu}(N\zeta)J_{\nu }\left[(N+1)\zeta(t_+)\right]
-J_{\nu}(N\zeta)Y_{\nu }\left[ (N+1)\zeta(t_+) \right]\big\},
\end{aligned}
\end{equation}
where we have made use of the identity
\begin{equation*}
J_{\nu+1}(x)Y_\nu(x)-J_{\nu}(x)Y_{\nu+1}(x)=\frac{2}{\pi x};
\end{equation*}
see \cite[p. 244]{Olver} and \cite[eq. (10.5.2)]{NIST}. Recall the relations
between the Bessel functions and modified Bessel functions \cite[pp. 60 \& 251]{Olver}
\begin{equation}\label{eq:5.8}
J_\nu(\i x)=\e^{\nu\pi \i/2}I_\nu(x),\qquad
Y_\nu(\i x)=\e^{(\nu+1)\pi\i/2}I_\nu(x)-\frac{2}{\pi}\e^{-\nu\pi \i/2}K_\nu(x),
\end{equation}
and let
\begin{equation}\label{eq:5.9}
\xi=-\i N\zeta(t),\qquad\xi'=-\i(N+1)\zeta\left(t_+\right).
\end{equation}
Then, (\ref{eq:5.6}) can be written as
\begin{equation}\label{eq:5.10}
G\Big(\zeta,\frac{1}{N}\Big)=\xi\big[I_{\nu}(\xi')K_{\nu+1}(\xi)+K_{\nu}(\xi')I_{\nu+1}(\xi)\big].
\end{equation}
Recall the asymptotic expansions of the modified Bessel functions for large
argument
\begin{equation}\label{eq:5.11}
I_\nu(x)\sim\frac{\e^x}{(2\pi x)^{1/2}}\sum_{s=0}^\infty(-1)^s\frac{\mu_s}{x^s}
\end{equation}
and
\begin{equation}\label{eq:5.12}
K_\nu(x)\sim\left(\frac{\pi}{2x}\right)^{1/2}\e^{-x}\sum_{s=0}^\infty\frac{\mu_s}{x^s},
\end{equation}
where $\mu_0=1$; see \cite[pp. 250-251,
238]{Olver}. A combination of (\ref{eq:5.10})-(\ref{eq:5.12}) gives
\begin{equation}\label{eq:5.13}
\begin{aligned}
G\Big(\zeta,\frac{1}{N}\Big)\sim
\frac{1}{2}\left(\frac{\xi}{\xi'}\right)^\frac12&\left[\e^{\xi'-\xi}\sum_{s=0}^\infty(-1)^s\frac{\mu_s}{\xi^s}\left(\frac{\xi}{\xi'}\right)^s\sum_{l=0}^\infty\frac{\widetilde{\mu}_l}{\xi^l}\right.
\\
&\hspace{5mm}\left.+\e^{\xi-\xi'}\sum_{s=0}^\infty(-1)^s\frac{\widetilde{\mu}_s}{\xi^s}\sum_{l=0}^\infty\frac{\mu_l}{\xi^l}\left(\frac{\xi}{\xi'}\right)^l\right],
\end{aligned}
\end{equation}
where $\widetilde{\mu}_0=1$ and
$$
\widetilde{\mu}_{s}(\nu)=\mu_s(\nu+1),\qquad s\ge1.
$$
It follows from (\ref{eq:4.9}) and (\ref{eq:4.10}) that as
$t\to-\infty$
\begin{equation}\label{eq:5.14}
\zeta^{\pm1}(t)=\O{{|t|^{\pm\frac{1}{\theta}}}},\qquad\theta>{2},
\end{equation}
and
\begin{equation}\label{eq:5.15}
\zeta^{\pm1}(t)=\O{{\log^{\pm1}|t|}},\qquad 0<\theta<2
\mbox{ or } \theta<0.
\end{equation}
Using (\ref{eq:4.9}), (\ref{eq:4.10}) and (\ref{eq:5.14}), a straightforward calculation gives
\begin{equation}\label{eq:5.16}
\begin{aligned}
\xi'-\xi=&\log\frac{\alpha_0't+\beta_0'+\sqrt{(\alpha_0't+\beta_0')^2-4}}{2}\\
&+\frac{\theta\alpha_0't}{N}\int_0^1\frac{(s-1)(1+\frac{s}{N})^{-\theta-1}}{\sqrt{(\alpha_0't(1+\frac{s}{N})^{-\theta}+\beta_0')^2-4}}ds:=\sum_{j=0}^\infty\frac{\delta_j(t)}{N^j},
\end{aligned}
\end{equation}
where
$\delta_0(t)=\log[(\alpha_0't+\beta_0'+\sqrt{(\alpha_0't+\beta_0')^2-4})/2]$
and
\begin{equation}\label{eq:5.17}
\delta_j(t)=\frac{(-1)^{j}\theta}{j(j+1)}+\mathcal{O}_j\left({|t|^{-1}}\right)\qquad
\text{as $t\to-\infty$}.
\end{equation}
Note: in (\ref{eq:5.17}) we have used a subscript ``$j$'' to indicate that each $\mathcal{O}$-term depends on $j$, $j=1,2,\dots$.
Hence,
\begin{equation}\label{eq:5.18}
\frac{\xi'}{\xi}=1+\i\sum_{j=1}^\infty\frac{\delta_{j-1}(t)\zeta^{-1}(t)}{N^j}.
\end{equation}
For each $j\ge1$, the coefficient of $N^{-j}$ is
$\O{\log^{-1}|t|}$ for large negative $t$.
We define
$$
\exp\left[\sum_{j=1}^{\infty}\frac{1}{N^j}\left(\delta_j(t)-\frac{(-1)^j\theta}{j(j+1)}\right)\right]
:=\sum_{j=0}^{\infty}\frac{a_j(t)}{N^j},$$
$$
\exp\left[\sum_{j=1}^{\infty}\frac{1}{N^j}\left(-\delta_j(t)-\frac{(-1)^j\theta}{j(j+1)}\right)\right]:=\sum_{j=0}^{\infty}\frac{b_j(t)}{N^j}
$$
and
$$
\left(\frac{\xi}{\xi'}\right)^l:=\sum_{j=0}^{\infty}\frac{c_{l,j}(t)}{N^j}.
$$
It is readily verified that $a_0(t)=b_0(t)=c_{l,0}(t)=1$,
$a_j(t)=\mathcal{O}_j(|t|^{-1})$ for each $j \geq 1$ and each $l \geq 0$, and other $b_j(t)$, $c_{l,j}(t)$ are also
bounded for $t\leq t_2-\sigma$. With these notations we have from (\ref{eq:5.13}) that
$$
\begin{aligned}
2G\Big(\zeta,\frac{1}{N}\Big)\sim&
\exp\left[\delta_0(t)+\sum_{j=1}^\infty\frac{(-1)^j\theta}{j(j+1)} \cdot \frac{1}{N^j}\right]
\cdot\sum_{j=0}^{\infty}\frac{a_j(t)}{N^j}\\
&\times\sum_{s=0}^\infty \frac{\widetilde{\mu}_s}{N^s}\left(\i \zeta^{-1}(t)\right)^s
\cdot\sum_{l=0}^\infty\frac{(-1)^l\mu_l}{N^l}\left(\i\zeta ^{-1}(t)\right)^l
\sum_{m=0}^\infty\frac{c_{l+\frac12,m}(t)}{N^m}\\
&+\exp\left[-\delta_0(t)+\sum_{j=1}^{\infty}\frac{(-1)^j\theta}{j(j+1)}
\frac{1}{N^j}\right]\cdot \sum_{j=0}^{\infty}\frac{b_j(t)}{N^j}\\
&\times\sum_{s=0}^\infty(-1)^s\frac{\widetilde{\mu}_s}{N^s}\left(\i\zeta^{-1}(t)\right)^s
\cdot \sum_{l=0}^\infty \frac{\mu_l}{N^l}\left(\i\zeta^{-1}(t)\right)^l
\sum_{m=0}^\infty\frac{c_{l+\frac{1}{2},m}(t)}{N^m},
\end{aligned}
$$
which in turn gives
$$
\begin{aligned}
2G\Big(\zeta,\frac{1}{N}\Big)
\sim &\e^{\theta} \left(1+\frac{1}{N}\right)^{-(N+1)\theta}
\left[\frac{\alpha_0't+\beta_0'+\sqrt{(\alpha_0't+\beta_0')^2-4}}{2}\cdot
\sum_{p=0}^\infty \frac{w_{1,p}(t)}{N^p}\right]\\
&+\e^{\theta}
\left(1+\frac{1}{N}\right)^{-(N+1)\theta}\left[
\frac{\alpha_0't+\beta_0'-\sqrt{(\alpha_0't+\beta_0')^2-4}}{2}\cdot
\sum_{p=0}^\infty \frac{w_{2,p}(t)}{N^p}\right],
\end{aligned}
$$
where $w_{1,0}(t)=w_{2,0}(t)=1$ and for $p \ge 1$
$$
w_{1,p}(t)=\sum_{j+s+l+m=p}(-1)^{l}
a_j(t)\widetilde{\mu}_s\mu_l[\i\zeta^{-1}(t)]^{s+l}
c_{l+\frac12,m}(t),
$$
$$
w_{2,p}(t)=\sum_{j+s+l+m=p}(-1)^{s}
b_j(t)\widetilde{\mu}_s\mu_l[\i\zeta^{-1}(t)]^{s+l}
c_{l+\frac12,m}(t).
$$
Note that for each $p\geq1$, we
have $w_{1,p}=\O{1}$ and $w_{2,p}=\O{1}$. If we set
$$
\tau_p(t) =
\frac{\alpha_0't+\beta_0'+\sqrt{(\alpha_0't+\beta_0')^2-4}}{2(\alpha_0't+\beta_0')}
w_{1,p}(t)+\frac{\alpha_0't+\beta_0'-\sqrt{(\alpha_0't+\beta_0')^2-4}}{2(\alpha_0't+\beta_0')}
w_{2,p}(t),
$$
then we further obtain
\begin{equation}\label{eq:5.19}
G\Big(\zeta,\frac{1}{N}\Big) \sim
\frac{\alpha_0't+\beta_0'}{2}\e^{\theta}\left(1+\frac{1}{N}\right)^{-(N+1)\theta}\sum_{p=0}^\infty
\frac{\tau_p(t)}{N^p},
\end{equation}
where $\tau_0(t)=1$ and $\tau_p(t)=\O{1}$ for each $p \geq 1$.
Let
\begin{equation} \label{eq:5.20}
\e^{\theta}\left(1+\frac{1}{N}\right)^{-(N+1)\theta}=\exp
\left[\theta-\theta (N+1) \log
\left(1+\frac{1}{N}\right)\right]:=\sum_{s=0}^\infty
\frac{\omega_s}{N^s}.
\end{equation}
A combination of  (\ref{eq:5.19}) and (\ref{eq:5.20}) yields
\begin{equation}\label{eq:5.21}
G\Big(\zeta, \frac{1}{N}\Big) \sim
\frac{\alpha_0't+\beta_0'}{2} \sum_{s=0}^\infty \frac{1}{N^s}
\sum_{j=0}^s{\omega_{s-j}\tau_j(t)},
\end{equation}
and a comparison of (\ref{eq:3.4}) and (\ref{eq:5.21}) gives
\begin{equation}\label{eq:5.22}
G_s(\zeta) =
\frac{\alpha_0't+\beta_0'}{2}\left[\omega_s + \O {1}\right].
\end{equation}
The existence of the first limit in (\ref{eq:5.3}) is thus proved.
The existence of the second limit in (\ref{eq:5.3}) can be shown in a similar manner,
and we have thus established the two estimates in (\ref{eq:5.1}).
The proofs of the two inequalities in (\ref{eq:5.2}) are very similar to the ones given
above, and hence will not be repeated here.
This completes the proof of Lemma~\ref{lem:5.1}.
\end{proof}

Using the estimates proved in Lemma~\ref{lem:5.1}, we can provide some bounds for the
coefficient functions $A_s(\zeta)$ and $B_s(\zeta)$ in (\ref{eq:4.1}). We state
the result as follows.

\begin{lem} \label{lem:5.2}
For $s$, $j=0,1,2,\dots$ and $t<0$, there exists a constant $M_{s,j}$ such
that
\begin{equation} \label{eq:5.23}
\big|t^{j+\frac1{2\theta}}(-H_0/\zeta)^\frac{1}{2}D^jA_s(\zeta)\big|\leq
M_{s,j}(1+|t|^{-\frac{s}{\theta}})
\end{equation}
and
\begin{equation}\label{eq:5.24}
\big|t^{j+\frac1{2\theta}}(-H_0/\zeta)^\frac{1}{2}D^jB_s(\zeta)\big|\leq
M_{s,j}(1+|t|^{-\frac{s}{\theta}}),
\end{equation}
where $D^j$ denotes the $j$-th derivative with respect to $t$.
\end{lem}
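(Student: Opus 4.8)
The plan is to prove \eqref{eq:5.23} and \eqref{eq:5.24} by induction on $s$, and, for each fixed $s$, by a secondary induction on $j$. The first observation is that, by \eqref{eq:4.24}, the weight occurring in \eqref{eq:5.23}--\eqref{eq:5.24} equals $t^{j+\frac{1}{2\theta}}(-H_0/\zeta)^{1/2}=t^{j}\Lambda(t)$, so the assertion is equivalent to $|t^{j}\Lambda(t)D^{j}A_s(\zeta(t))|\le M_{s,j}(1+|t|^{-s/\theta})$ and likewise for $B_s$. The whole argument is driven by the recursion in \eqref{eq:4.23}, \eqref{eq:4.26}--\eqref{eq:4.27}, together with the bounds $G_s(\zeta),H_s(\zeta),K_s(\zeta),L_s(\zeta)=\O{|t|+1}$ from Lemma~\ref{lem:5.1}. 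I shall work with $0<\theta<2$, as in the rest of the paper; the cases $\theta>2$ and $\theta<0$ are treated in the same way once the lower limits of the integrals in \eqref{eq:4.26}--\eqref{eq:4.27} are replaced by $t_2$ or $-\infty$ (as explained at the end of the preceding section), with the evident changes in the power-counting at infinity.

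First I would record the behaviour of $\zeta$, $H_0$ and $\Lambda$ in the only two regimes that matter, $t\to0^{-}$ and $t\to-\infty$. Near $t=0$, substituting $\zeta(t)=\sqrt{t}\,g(t)$ into \eqref{eq:4.8} shows that $g$ is analytic and non-vanishing there, so $\zeta^{2}$ is analytic with $\zeta^{2}(0)=0$ and $(\zeta^{2})'(0)=-4\alpha_0'/(2-\theta)^{2}>0$; moreover $H_0(\zeta)=-(-\alpha_0't)^{1/2}(1+\alpha_0't/4)^{1/2}$, so that $-H_0/\zeta$ extends to an analytic non-vanishing function near $0$ and $\Lambda(t)=t^{1/(2\theta)}\lambda(t)$ with $\lambda$ analytic and non-zero. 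As $t\to-\infty$ one has $|H_0(\zeta(t))|\sim\tfrac12|\alpha_0'|\,|t|$ and, by \eqref{eq:5.15}, $\zeta(t)=\O{\log|t|}$, $\zeta(t)^{-1}=\O{1/\log|t|}$; differentiating \eqref{eq:4.8} shows in addition that $\tfrac{d^{k}}{dt^{k}}\log\Lambda(t)=\O{|t|^{-k}}$ for every $k\ge1$, in both regimes. With these facts the base case $s=0$ is immediate: $\Lambda A_0=1$ and $\Lambda B_0=0$ by \eqref{eq:4.25}, so $D^{j}A_0=D^{j}(\Lambda^{-1})$ equals $\Lambda^{-1}$ times a polynomial in the logarithmic derivatives of $\Lambda$, whence $t^{j}\Lambda D^{j}A_0=\O{1}$ uniformly on $(-\infty,t_2-\sigma)$, while $D^{j}B_0\equiv0$.

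For the inductive step, assume \eqref{eq:5.23}--\eqref{eq:5.24} for all indices $<s$ and all $j$, and write $f_s:=f_{p-1}$, $g_s:=g_{p-1}$ with $p=s+1$ in \eqref{eq:4.15}--\eqref{eq:4.16}. Each of $f_s$ and $g_s$ is a finite sum of terms of the type $c\,\gamma_{l,m}\binom{\cdot}{\cdot}G_i(\zeta)\,t^{l}D^{l}A_{s'}(\zeta)$ (and with $G_i,A$ replaced by $L_i,H_i,K_i$ and $B$) with $s'\le s-1$, plus the $\tfrac12(\alpha_k't+\beta_k')A_{p-k}$-type terms; since the coefficients $G_i(\zeta),\dots$ are $\O{|t|+1}$, the inductive hypothesis gives $f_s(u),g_s(u)=\O{|\Lambda(u)|^{-1}(|u|+1)(1+|u|^{-(s-1)/\theta})}$. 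Substituting this into \eqref{eq:4.26}--\eqref{eq:4.27}, the factor $\Lambda(u)$ in the integrand cancels the $|\Lambda(u)|^{-1}$ from the bound on $f_s$, leaving an integrand bounded by $|u|^{s/\theta-1}(|u|+1)(1+|u|^{-(s-1)/\theta})/|H_0(u)|$. Near $u=0$ this is $\O{|u|^{1/\theta-3/2}}$, which is integrable precisely because $\theta<2$, and integration produces $\O{|t|^{1/\theta-1/2}}$; dividing by $t^{s/\theta}$ and using $1/\theta-1/2>0$ gives $|\Lambda(t)B_s|\le C(1+|t|^{-s/\theta})$ near the transition point. As $u\to-\infty$ the factor $|H_0(u)|\sim\tfrac12|\alpha_0'|\,|u|$ absorbs $(|u|+1)$, the remaining integrand is $\O{|u|^{s/\theta-1}}$, so $\int_{-1}^{t}$ is $\O{|t|^{s/\theta}}$ and hence $|\Lambda(t)B_s|=\O{1}$ there; on compact subintervals there is nothing to check. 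This proves \eqref{eq:5.24} for $j=0$, and \eqref{eq:5.23} for $j=0$ follows from \eqref{eq:4.27} in exactly the same way. The derivative bounds then follow by a secondary induction on $j$: differentiating the first identity in \eqref{eq:4.23} (with $p-1=s$), namely $\frac{d}{dt}\big(t^{s/\theta}\Lambda B_s\big)=\frac{t^{s/\theta-1}\Lambda}{\theta H_0}f_s$, expresses $D^{j+1}B_s$ through $D^{j}B_s$ and lower derivatives of $B_s$ (with coefficients equal to logarithmic derivatives of $t^{s/\theta}\Lambda$, hence $\O{|t|^{-1}}$ in both regimes) and through $D^{j}$ of the source $\frac{t^{-1}}{\theta H_0}f_s$; since differentiating $f_s$ produces only derivatives (of all orders) of $A_{s'},B_{s'}$ with $s'\le s-1$, all covered by the inductive hypothesis, one checks exactly as above that $|t^{j+1}\Lambda D^{j+1}B_s|\le M_{s,j+1}(1+|t|^{-s/\theta})$, and similarly for $A_s$.

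The step I expect to be the main obstacle is the power-counting of the integrals in \eqref{eq:4.26}--\eqref{eq:4.27} near the transition point $t=0$: one has to check that the integrand is genuinely integrable there --- which is exactly where the hypothesis $\theta<2$ enters --- and that, after multiplying by $t^{-s/\theta}$, no power worse than $|t|^{-s/\theta}$ survives. The cancellation of $\Lambda$ inside the integrand and the fact that $(\zeta^{2})'(0)>0$ (so that $\zeta\sim c\sqrt{t}$ and $H_0\sim-c'\sqrt{t}$ with the same exponent) are precisely what make this bookkeeping close. A secondary but genuine difficulty is keeping the constants uniform down to $t=-\infty$, where $\zeta$ grows only like $\log|t|$ when $0<\theta<2$ (and like $|t|^{1/\theta}$ when $\theta>2$), so that the weight $(-H_0/\zeta)^{1/2}$ is itself unbounded; the cancellation of this weight against the corresponding factor in the estimate for $f_s$, and in the differentiated identities, is what prevents the bound from deteriorating at infinity.
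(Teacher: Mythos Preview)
Your proposal is correct and follows essentially the same inductive scheme as the paper's proof, with the same integral power-counting near $t=0$ and $t=-\infty$ and the same use of \eqref{eq:4.23}/\eqref{eq:4.17}--\eqref{eq:4.18} for the secondary induction on $j$. The one point you should fill in is that differentiating $f_s$ (and $g_s$) also produces derivatives of $G_i(\zeta)$, $H_i(\zeta)$, $K_i(\zeta)$, $L_i(\zeta)$, not ``only'' derivatives of $A_{s'},B_{s'}$; the paper handles this by first recording, via Cauchy's integral formula in the half-plane $\Re t<0$, the bounds $|t^j D^j G_i(\zeta)|\le N_{i,j}(1+|t|)$ (and likewise for $H_i,K_i,L_i,\zeta$), which then feed into the derivative estimates for $f_s,g_s$ exactly as you intend.
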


\begin{proof}
Our approach is based on mathematical induction.
Since $\zeta(t)$, $G_s(\zeta)$, $H_s(\zeta)$, $L_s(\zeta)$ and $K_s(\zeta)$ are all analytic functions in the left half plane $\Re t<0$,
and since the estimates in Lemma~\ref{lem:5.1} also hold for complex $t$ in this half plane,
an application of Cauchy's integral formula shows that for $s,j=0,1,2,\dots$ and $t<0$,
there exist constants $\widetilde{N}_j$ and $N_{s,j}$ such that
\begin{equation}\label{eq:5.25}
\left|t^jD^j\zeta\right|\leq \widetilde{N}_j(1+|\zeta|),
\end{equation}
\begin{equation}
\left|t^jD^jG_s(\zeta)\right|\leq N_{s,j}(1+|t|),
\end{equation}
\begin{equation}
\left|t^jD^jH_s(\zeta)\right|\leq N_{s,j}(1+|t|),
\end{equation}
\begin{equation}
\left|t^jD^jK_s(\zeta)\right|\leq N_{s,j}(1+|t|),
\end{equation}
\begin{equation}\label{eq:5.29}
\left|t^jD^jL_s(\zeta)\right|\leq N_{s,j}(1+|t|).
\end{equation}
When $s=0$, the estimates in (\ref{eq:5.23}) and (\ref{eq:5.24}) follow from (\ref{eq:4.25}) for $j=0$.
This, together with (\ref{eq:4.17}) and (\ref{eq:4.18}), implies that (\ref{eq:5.23}) and (\ref{eq:5.24}) hold for $j=1$.
Furthermore, by differentiating (\ref{eq:4.17}) and (\ref{eq:4.18}) $j$ times with respect to $t$,
one can show that (\ref{eq:5.23}) and (\ref{eq:5.24}) also hold for $j\geq2$.
Now assume these estimates hold for $s=0,1,\dots,p-2$, $p\geq 2$.
We shall show that they are valid for $s=p-1$.
Using (\ref{eq:5.25})--(\ref{eq:5.29}), it can be shown from (\ref{eq:4.15}) and (\ref{eq:4.16}) that
\begin{equation}\label{eq:5.30}
|t^\frac{1}{2\theta}(-H_0/\zeta)^\frac12 f_{p-1}(t)|\leq C_{p,0} (1+|t|)(1+|t|^{-\frac{p-2}{\theta}})
\end{equation}
and
\begin{equation}\label{eq:5.31}
|t^\frac{1}{2\theta}(-H_0/\zeta)^\frac12 g_{p-1}(t)|\leq C_{p,0} (1+|t|)(1+|t|^{-\frac{p-2}{\theta}}),
\end{equation}
where $C_{p,0}$ is a positive constant. Combining the last two inequalities with (\ref{eq:4.27}) leads to
\begin{equation}\label{eq:5.32}
\begin{aligned}
|t^\frac{1}{2\theta}(-H_0/\zeta)^\frac12 A_{p-1}(\zeta)|
=& |t|^{-\frac{p-1}{\theta}}\left|C+\int_{-1}^t\frac{s^{\frac{p-1}{\theta}-1}}{\theta H_0(\zeta(s))}\Lambda(s)f_{p-1}(s)ds\right|\\
\leq&|t|^{-\frac{p-1}{\theta}}\left( C+
\widetilde{C}_{p,0}\left|\int_{-1}^t |s|^{\frac{p-1}{\theta}-1}(1+|s|^{-\frac{p-2}{\theta}})
ds\right|\right)\\
\leq&|t|^{-\frac{p-1}{\theta}}\left( C+
\widetilde{C}_{p,0}\int_1^{-t} (\tau^{\frac{p-1}{\theta}-1}+\tau^{\frac{1}{\theta}-1})d\tau\right)\\
\leq&M_{p-1,0}(1+|t|^{-\frac{p-1}{\theta}}).
\end{aligned}
\end{equation}
Here we have made use of (\ref{eq:4.19}). In a similar manner, we have
\begin{equation}\label{eq:5.33}
\big|t^{\frac1{2\theta}}(-H_0/\zeta)^{\frac12}B_{p-1}(\zeta)\big|\leq
M_{p-1,0}(1+|t|^{-\frac{p-1}{\theta}}).
\end{equation}
Hence, (\ref{eq:5.23}) and (\ref{eq:5.24}) hold for $s=p-1$ and $j=0$.
Using (\ref{eq:4.17}) and (\ref{eq:4.18}), one can also show that (\ref{eq:5.23}) and (\ref{eq:5.24}) are valid for $s=p-1$ and $j=1,2,\dots$.
This completes the proof of Lemma~\ref{lem:5.2} by induction.
\end{proof}

\section{Asymptotic nature of the expansion}

Since $x$ is fixed in the recurrence relation (\ref{eq:1.7}),
we let $W_\nu(x):=Y_\nu(x)-\i J_\nu(x)$ and choose
\begin{equation*}
Z_{\nu }(N\zeta )=x^\frac{1}{2\theta}J_{\nu}(N\zeta)\quad \text{and}\quad
Z_{\nu }(N\zeta )=x^\frac{1}{2\theta}W_{\nu }(N\zeta )
\end{equation*}
in (\ref{eq:4.1}) to yield two linearly independent solutions
\begin{equation}\label{eq:6.1}
\begin{aligned}
P_n(N^\theta t)=&
\left(\frac{4\zeta^2}{4-(\alpha_0' t+\beta_0')^2}\right)^\frac14
N^\frac12\left[J_\nu(N\zeta )\sum_{s=0}^p\frac{\widetilde{A}_s(\zeta)}{N^{s}}\right.\\
&\hspace{4.5cm}\left.+J_{\nu+1}(N\zeta )\sum_{s=0}^p\frac{\widetilde{B}_s(\zeta)}{N^{s}}+\varepsilon_n^p(N,t)\right]
\end{aligned}
\end{equation}
and
\begin{equation}\label{eq:6.2}
\begin{aligned}
Q_n(N^\theta t)=&
\left(\frac{4\zeta^2}{4-(\alpha_0' t+\beta_0')^2}\right)^\frac14
N^\frac12\left[W_\nu(N\zeta )\sum_{s=0}^p\frac{\widetilde{A}_s(\zeta)}{N^{s}}\right.\\
&\hspace{4.0cm}\left.+W_{\nu+1}(N\zeta )\sum_{s=0}^p\frac{\widetilde{B}_s(\zeta)}{N^{s}}+\delta_n^p(N,t)\right],
\end{aligned}
\end{equation}
where $\widetilde{A}_{s}(\zeta )=t^{\frac{1}{2\theta}}[-H_{0}(\zeta )/\zeta]^{\frac12} A_{s}(\zeta )$,
$\widetilde{B}_{s}(\zeta)=t^{\frac{1}{2\theta}}[-H_{0}(\zeta )/\zeta]^{\frac12} B_{s}(\zeta )$, and
\begin{equation}\label{eq:6.3}
\widetilde{A}_{0}(\zeta )=1,\qquad
\widetilde{B} _{0}(\zeta )=0.
\end{equation}
Here we have made use of (\ref{eq:4.19}), (\ref{eq:4.24}) and (\ref{eq:4.25}).
From (\ref{eq:4.9}) and (\ref{eq:4.10}), one can get the behavior of $\zeta(t)$ as $t\to-\infty$; cf. (\ref{eq:5.14}) and (\ref{eq:5.15}).
With this, it follows from Cauchy's integral formula that there is a constant $C_k$ such that
\begin{equation*}
\left|t^kD^k\Lambda(t)\right|\leq C_k|\Lambda(t)|,\qquad t<0.
\end{equation*}
Applying Leibniz's rule to the products $\widetilde{A}_s(\zeta)=\Lambda(t)A_s(\zeta)$ and
$\widetilde{B}_s(\zeta)=\Lambda(t)B_s(\zeta)$,
the estimates in (\ref{eq:5.23}) and (\ref{eq:5.24}) give
\begin{equation}\label{eq:6.4}
|t^{j}D^{j}\widetilde{A}_{s}(\zeta)|
\leq \widetilde{M} _{s,j}(1+|t|^{-\frac{s}{\theta }})
\end{equation}
and
\begin{equation}\label{eq:6.5}
|t^{j}D^{j}\widetilde{B}_{s}(\zeta)
|\leq \widetilde{M} _{s,j}(1+|t|^{-\frac{s}{\theta }}),
\end{equation}
where $\widetilde{M}_{s,j}$ is some other constant.
It readily follows from (\ref{eq:6.3}) that $\widetilde{A}_0(\zeta)$ and $\widetilde{B}_0(\zeta)$
are bounded for $t<t_2-\sigma$. Assume that $\widetilde{A}_s(\zeta)$ and $\widetilde{B}_s(\zeta)$
are bounded for $t<t_2-\sigma$ and $s\leq p$.
Using an argument similar to that given for (\ref{eq:5.30})-(\ref{eq:5.33}),
we have from (\ref{eq:4.15}) and (\ref{eq:4.16}) that
\begin{equation*}
|t^\frac{1}{2\theta}(-H_0/\zeta)^\frac12 f_{p+1}(t)|\leq C_{p+1,0} (1+|t|),
\end{equation*}
\begin{equation*}
|t^\frac{1}{2\theta}(-H_0/\zeta)^\frac12 g_{p+1}(t)|\leq C_{p+1,0}  (1+|t|).
\end{equation*}
As a consequence, it can be shown that (\ref{eq:5.23}) and (\ref{eq:5.24}) can be improved to read
\begin{equation*}
\big|t^{j+\frac1{2\theta}}(-H_0/\zeta)^\frac{1}{2}D^jA_s(\zeta)\big|\leq M_{s,j}
\end{equation*}
and
\begin{equation*}
\big|t^{j+\frac1{2\theta}}(-H_0/\zeta)^\frac{1}{2}D^jB_s(\zeta)\big|\leq M_{s,j}
\end{equation*}
for $t<0$ and $s,j\geq 0$. In particular, these two estimates hold for $j=0$ and $s=p+1$;
that is, $\widetilde{A}_{p+1}(\zeta)$ and $\widetilde{B}_{p+1}(\zeta)$ are bounded for $t<0$.
By the continuity of $\zeta(t)$, $G_s(\zeta)$ and $H_s(\zeta)$ we conclude that
$\widetilde{A}_{p+1}(\zeta)$ and $\widetilde{B}_{p+1}(\zeta)$ are also bounded
for $t<t_2-\sigma$, thus completing the induction argument.

Now we shall show that (\ref{eq:1.7}) has two linearly independent
solutions $ P_{n}(x)$ and $Q_{n}(x)$ with (\ref{eq:6.1}) and
(\ref{eq:6.2}) being their uniform asymptotic expansions for $t<t_{2}-\sigma$.
We need to show that (\ref{eq:2.20}) and (\ref{eq:2.21}) hold for $t<t_2-\sigma$.

For convenience, we introduce the notations
\begin{equation}\label{eq:6.6}
A_{p}\Big( \zeta ,\frac{1}{N}\Big) :=
\left(\frac{4\zeta^2}{4-(\alpha_0' t+\beta_0')^2}\right)^\frac14 N^\frac12
\sum_{s=0}^{p}\frac{\widetilde{A}_{s}(\zeta )}{N^{s}},
\end{equation}
\begin{equation}\label{eq:6.7}
B_{p}\Big( \zeta ,\frac{1}{N}\Big) :=
\left(\frac{4\zeta^2}{4-(\alpha_0' t+\beta_0')^2}\right)^\frac14 N^\frac12
\sum_{s=0}^{p}\frac{\widetilde{B}_{s}(\zeta)}{N^{s}},
\end{equation}
\begin{equation}\label{eq:6.8}
r_n^p(x):=J_\nu(N\zeta )A_p\Big(\zeta,\frac{1}{N}\Big)
+J_{\nu+1}(N\zeta )B_p\Big(\zeta,\frac{1}{N}\Big)
\end{equation}
and
\begin{equation}\label{eq:6.9}
s_n^p(x):=W_\nu(N\zeta )A_p\Big(\zeta,\frac{1}{N}\Big)
+W_{\nu+1}(N\zeta)B_p\Big(\zeta,\frac{1}{N}\Big).
\end{equation}

By Lemma~\ref{lem:3.1} we have
\begin{equation}\label{eq:6.10}
r_{n+1}^{p}(x)-(A_{n}x+B_{n})r_{n}^{p}(x)+r_{n-1}^{p}(x)
=\frac{R_{n}^{p}(x)}{N^{p+\frac{3}{2}}},
\end{equation}
where $x=N^{\theta }t$ and the inhomogeneous term is given by
\begin{equation} \label{eq:6.11}
\frac{R_n^p(x)}{N^{p+\frac{3}{2}}}=J_\nu(N\zeta )F_{1,n}(x)+J_{\nu+1}(N\zeta )F_{2,n}(x)
\end{equation}
with
\begin{equation} \label{eq:6.12}
\begin{aligned}
F_{1,n}(x)=
&G\Big(\zeta,\frac{1}{N}\Big)A_p\Big(\zeta(t_+),\frac{1}{N+1}\Big)
+G\Big(\zeta,-\frac{1}{N}\Big)A_p\Big(\zeta(t_-),\frac{1}{N-1}\Big)\\
&+L\Big(\zeta,\frac{1}{N}\Big)B_p\Big(\zeta(t_+),\frac{1}{N+1}\Big)
-L\Big(\zeta,-\frac{1}{N}\Big)B_p\Big(\zeta(t_-),\frac{1}{N-1}\Big) \\
&-\Psi\Big(t,\frac{1}{N}\Big)A_p\Big(\zeta,\frac{1}{N}\Big)\\
\end{aligned}
\end{equation}
and
\begin{equation} \label{eq:6.13}
\begin{aligned}
F_{2,n}(x)=&
K\Big(\zeta,\frac{1}{N}\Big)B_p\Big(\zeta(t_+),\frac{1}{N+1}\Big)
+K\Big(\zeta,-\frac{1}{N}\Big)B_p\Big(\zeta(t_-),\frac{1}{N-1}\Big)\\
&+ H\Big(\zeta,\frac{1}{N}\Big)A_p\Big(\zeta(t_+),\frac{1}{N+1}\Big)
-H\Big(\zeta,-\frac{1}{N}\Big)A_p\Big(\zeta(t_-),\frac{1}{N-1}\Big)\\
&-\Psi\Big(t,\frac{1}{N}\Big)B_p\Big(\zeta,\frac{1}{N}\Big);
\end{aligned}
\end{equation}
cf. (\ref{eq:4.5}) and (\ref{eq:4.6}).
Note that the series
\begin{equation*}
A\Big( \zeta ,\frac{1}{N}\Big) :=\sum_{s=0}^{\infty}\frac{A_{s}(\zeta )}{ N^{s}}
\qquad \text{and}\qquad
B\Big(\zeta,\frac{1}{N}\Big) :=\sum_{s=0}^{\infty }\frac{B_{s}(\zeta)}{N^{s}}
\end{equation*}
in (\ref{eq:4.2}) are formal solutions of (\ref{eq:4.5}) and
(\ref{eq:4.6}). Since $ A_{p}\left( \zeta ,\frac{1}{N}\right) $ and
$B_{p}\left( \zeta ,\frac{1}{N} \right) $ can be rewritten as
\begin{equation} \label{eq:6.14}
A_{p}\Big( \zeta ,\frac{1}{N}\Big) :=x^{\frac{1}{2\theta }}\sum_{s=0}^{\infty }\frac{A_{s}^{\ast }(\zeta )}{N^{s}}
\quad\text{and}\quad
B_{p}\Big( \zeta ,\frac{1}{N}\Big):=x^{\frac{1}{2\theta } }\sum_{s=0}^{\infty }\frac{B_{s}^{\ast}(\zeta )}{N^{s}}
\end{equation}
with $A_{s}^{\ast }(\zeta )=A_{s}(\zeta )$, $B_{s}^{\ast }(\zeta
)=B_{s}(\zeta )$, for $s\leq p$ and $A_{s}^{\ast }(\zeta )=0$,
$B_{s}^{\ast }(\zeta )=0$ for $s>p$, terms with powers of $1/N$
less than or equal to $p+1 $ in the expansions $F_{1,n}(x)$ and $F_{2,n}(x)$ all vanish,
\textit{i.e.}, if we write
$(N^{\theta }t)^{-\frac{1}{2\theta}}F_{1,n}(x)\sim \sum_{s=0}^{\infty}f_{1,s}(t)N^{-s}$ and
$(N^{\theta }t)^{-\frac{1}{2\theta}}F_{2,n}(x)\sim \sum_{s=0}^{\infty}f_{2,s}(t)N^{-s}$,
then $f_{1,s}(t) $=$f_{2,s}(t)=0$ for $0\leq s\leq p+1$.
(For $s=0,1,\dots,p$, all we need are recurrence relations obtained from (\ref{eq:4.5}) and (\ref{eq:4.6});
for $s=p+1$, there are extra terms and we have to show that these terms indeed all cancel out.)
Using (\ref{eq:5.6})--(\ref {eq:5.12}), it can be shown that there is a constant $C$ such that
\begin{equation*}
\Big|G\Big(\zeta ,\frac{1}{N}\Big)\Big|\leq C(1+|t|),\qquad
\Big|H\Big(\zeta ,\frac{1 }{N}\Big)\Big|\leq C(1+|t|)
\end{equation*}
and
\begin{equation*}
\Big|K\Big(\zeta ,\frac{1}{N}\Big)\Big|\leq C(1+|t|),\qquad
\Big|L\Big(\zeta ,\frac{1 }{N}\Big)\Big|\leq C(1+|t|)
\end{equation*}
for all $t<t_{2}-\sigma$. Recall that
$\widetilde{A}_s(\zeta)$ and $\widetilde{B}_s(\zeta)$ are bounded in $t<t_2-\sigma$ for $s\leq p$.
Hence by Lemma~\ref{lem:5.1}, it can be proved that there
exists a constant $C_{p}$ such that
\begin{equation}\label{eq:6.15}
|t^\frac{1}{2\theta}(-H_{0}/\zeta)^{\frac{1}{2}}(N^\theta t)^{-\frac{1}{2\theta}}F_{1,n}(N^{\theta }t)|\leq C_{p}(1+|t|)/N^{p+2}
\end{equation}
and
\begin{equation} \label{eq:6.16}
|t^\frac{1}{2\theta}(-H_{0}/\zeta)^{\frac{1}{2}}(N^\theta t)^{-\frac{1}{2\theta}}F_{2,n}(N^{\theta }t)|\leq C_{p}(1+|t|)/N^{p+2}
\end{equation}
for all $t<t_{2}-\sigma$. Then, the last two inequalities, together with (\ref{eq:6.11}), lead to
\begin{equation}\label{eq:6.17}
|R_{n}^{p}(N^{\theta }t)|\leq \widetilde{C}_{p}(1+|t|)
\left(\frac{4\zeta^2}{4-(\alpha_0't+\beta_0')^2}\right)^\frac14
 \big[ |J_{\nu }(N\zeta )|+|J_{\nu +1}(N\zeta)| \big]
\end{equation}
for $t<t_{2}-\sigma$ and for some positive constant $\widetilde{C}_{p}$. Similarly, we have
\begin{equation}\label{eq:6.18}
s_{n+1}^{p}(x)-(A_{n}x+B_{n})s_{n}^{p}(x)+s_{n-1}^{p}(x)=\frac{S_{n}^{p}(x)}{N^{p+\frac{3}{2}}}
\end{equation}
with
\begin{equation}\label{eq:6.19}
|S_{n}^{p}(N^{\theta }t)|\leq \widetilde{C}_{p}(1+|t|)
\left(\frac{4\zeta^2}{4-(\alpha_0't+\beta_0')^2}\right)^\frac14
 \big[ |W_{\nu }(N\zeta )|+|W_{\nu +1}(N\zeta)| \big] .
\end{equation}

We now establish the existence of two solutions $P_{n}(x)$ and $Q_{n}(x)$ of
(\ref{eq:1.7}) satisfying
\begin{equation}\label{eq:6.20}
P_{n}(x)\sim r_{n}^{0}(x)\qquad \text{and}\qquad Q_{n}(x)\sim s_{n}^{0}(x)
\end{equation}
as $n\rightarrow \infty $ for $x=N^\theta t$ with any fixed $t<t_2-\sigma$.
In view of (\ref{eq:5.8}), by using (\ref{eq:5.11}) and (\ref{eq:5.12})
it is easily verified that
\begin{equation}\label{eq:6.21}
r_{n}^{0}(x)\sim \sqrt{\frac{1}{2\pi }}\e^{\nu\pi\i/2}
\left(\frac{4}{(\alpha_0't+\beta_0')^2-4}\right)^\frac14
\e^{-\i N\zeta}
\end{equation}
and
\begin{equation}\label{eq:6.22}
s_{n}^{0}(x)\sim-\sqrt{\frac{2}{\pi}}\e^{-\nu\pi\i/2}
\left(\frac{4}{(\alpha_0't+\beta_0')^2-4}\right)^\frac14
\e^{\i N\zeta}
\end{equation}
as $n\rightarrow \infty$ for any fixed $t<0$.
From (\ref{eq:3.24}), (\ref{eq:3.27}), (\ref{eq:3.28}) and (\ref{eq:4.8}), it follows that
\begin{equation*}
(N+1)\zeta(t_+)-N\zeta(t)\sim
\frac1\i \log\left[\Big(\frac{\alpha't+\beta_0'}{2}\Big)+\i\sqrt{1-\Big(\frac{\alpha't+\beta_0'}{2}\Big)^2}\right]
\end{equation*}
as $n\to\infty$.
If $P_{n}(x)$ and $Q_{n}(x)$ are two solutions of (\ref{eq:1.7}) satisfying (\ref {eq:6.20}),
then we have from (\ref{eq:6.20})--(\ref{eq:6.22})
\begin{equation}\label{eq:6.23}
P_{n+1}(x)Q_{n}(x)-P_{n}(x)Q_{n+1}(x)=P_{n+2}(x)Q_{n+1}(x)-P_{n+1}(x)Q_{n+2}(x)
\end{equation}
and
\begin{equation}\label{eq:6.24}
\begin{aligned}
&P_{n+1}(x)Q_{n}(x)-P_{n}(x)Q_{n+1}(x)\\
=&\lim_{m\rightarrow \infty}[r_{m+1}^{0}(x)s_{m}^{0}(x)-r_{m}^{0}(x)s_{m+1}^{0}(x)]
=\frac{2}{\pi }
\end{aligned}
\end{equation}
for fixed $t<0$. Examining the behavior of  the Bessel functions, one can show that (\ref{eq:6.24}) also holds for fixed $t\geq 0$.
This, in particular, demonstrates that $P_{n}(x)$ and $Q_{n}(x)$ are two linearly independent solutions.

Now define
\begin{equation}\label{eq:6.25}
\varepsilon_n^p(x):=P_n(x)-r_n^p(x)
\qquad\text{and}\qquad
\delta_n^p(x):=Q_n(x)-s_n^p(x).
\end{equation}
We first show that the existence of $Q_n(x)$ to (\ref{eq:1.7})
satisfying (\ref {eq:6.20}) is equivalent to the existence of
$\delta_n^p(x)$ to the summation formula
\begin{equation} \label{eq:6.26}
\begin{aligned}
\delta_n^p(x)=&\sum_{j=n+1}^\infty
\frac{[s_n^p(x)r_j^p(x)-r_n^p(x)s_j^p(x)]S_j^p(x)}{[s_{n+1}^p(x)r_n^p(x)-s_n^p(x)r_{n+1}^p(x)](j+\tau_0)^{p+\frac{3}{2}}}\\
&+\sum_{j=n+1}^\infty\frac{[s_n^p(x)R_j^p(x)-r_n^p(x)S_j^p(x)]\delta_j^p(x)}{[s_{n+1}^p(x)r_n^p(x)-s_n^p(x)r_{n+1}^p(x)](j+\tau_0)^{p+\frac{3}{2}}}.
\end{aligned}
\end{equation}
From (\ref{eq:1.7}) and (\ref{eq:6.18}), we obtain
\begin{equation}\label{eq:6.27}
\delta _{n+1}^{p}(x)-(A_{n}x+B_{n})\delta _{n}^{p}(x)+\delta_{n-1}^{p}(x)
=-\frac{S_{n}^{p}(x)}{N^{p+\frac{3}{2}}}.
\end{equation}
Coupling (\ref{eq:6.10}) and (\ref{eq:6.27}) gives
\begin{equation}
\begin{aligned}
r_n^p(x)\delta_{n+1}^p(x)-r_{n+1}^p(x)\delta_n^p(x)=&
r_{n+1}^p(x) \delta_{n+2}^p(x)-r_{n+2}^p(x)\delta_{n+1}^p(x)\\
&+\frac{S_{n+1}^p(x)r_{n+1}^p(x)+R_{n+1}^p(x)\delta_{n+1}^p(x)}{(N+1)^{p+3/2}},
\end{aligned}  \label{eq:6.28}
\end{equation}
which leads to
\begin{equation}\label{eq:6.29}
\begin{aligned}
r_n^p(x)\delta_{n+1}^p(x)-r_{n+1}^p(x)\delta_n^p(x)
=&r_{m+1}^p(x)\delta_{m+2}^p(x)-r_{m+2}^p(x)\delta_{m+1}^p(x)\\
&+\sum_{j=n+1}^{m+1}\frac{S_j^p(x)r_j^p(x)+R_j^p(x)\delta_j^p(x)}{(j+\tau_0)^{p+3/2}}.
\end{aligned}
\end{equation}
If $Q_n(x)$ satisfies (\ref{eq:6.20}), then $\delta_n^0(x)=o\left(s_n^0(x) \right)$ as $n \to \infty$.
Since $\delta_n^p(x)=\delta_n^0(x)+s_n^0(x)-s_n^p(x)$ by (\ref{eq:6.25}),
as well as $\widetilde{A}_s(\zeta)$ and $\widetilde{B}_s(\zeta)$ are bounded for $s\geq 0$,
on account of (\ref{eq:6.8}) we also have $\delta_n^p(x)=o(s_n^0(x))$ as $n \to \infty$.
In view of (\ref{eq:6.21}) and (\ref{eq:6.22}), we have
\begin{equation}\label{eq:6.30}
r_{m+1}^p(x)\delta_{m+2}^p(x)-r_{m+2}^p(x)\delta_{m+1}^p(x) \to 0
\end{equation}
as $m \to\infty$. Hence, by letting $m\rightarrow \infty $ in (\ref{eq:6.29}), we obtain
\begin{equation} \label{eq:6.31}
r_{n}^{p}(x)\delta_{n+1}^{p}(x)-r_{n+1}^{p}(x)\delta_{n}^{p}(x)
=\sum_{j=n+1}^{\infty }\frac{S_{j}^{p}(x)r_{j}^{p}(x)+R_{j}^{p}(x)\delta_{j}^{p}(x)}{(j+\tau_0)^{p+3/2}}.
\end{equation}
In a similar manner, we also have
\begin{equation} \label{eq:6.32}
s_{n}^{p}(x)\delta_{n+1}^{p}(x)-s_{n+1}^{p}(x)\delta_{n}^{p}(x)
=\sum_{j=n+1}^{\infty }\frac{S_{j}^{p}(x)s_{j}^{p}(x)+S_{j}^{p}(x)\delta_{j}^{p}(x)}{(j+\tau_0)^{p+3/2}}
\end{equation}
and
\begin{equation}\label{eq:6.33}
r_{n+1}^p(x)s_n^p(x)-s_{n+1}^p(x)r_n^p(x)=\frac2{\pi}
+\sum_{j=n+1}^{\infty }\frac{S_{j}^{p}(x)r_{j}^{p}(x)-R_{j}^{p}(x)s_{j}^{p}(x)}{(j+\tau_0)^{p+3/2}}.
\end{equation}
Then (\ref{eq:6.26}) follows from (\ref{eq:6.31}) and (\ref{eq:6.32}).
The existence of a solution $\{\delta _n^p(x)\}_{n=1}^\infty$ to
(\ref {eq:6.26}) is proved by using the method of successive approximation.
Starting with $\delta_{n,0}^p(x)=0$, we define $\delta_{n,k}^p(x)$ by
\begin{equation}\label{eq:6.34}
\begin{aligned}
\delta_{n,k}^p(x)=
&\sum_{j=n+1}^\infty\frac{[s_n^p(x)r_j^p(x)-r_n^p(x)s_j^p(x)]S_j^p(x)}{[s_{n+1}^p(x)r_n^p(x)-s_n^p(x)r_{n+1}^p(x)](j+\tau_0)^{p+\frac{3}{2}}}\\
&+\sum_{j=n+1}^\infty\frac{[s_n^p(x)R_j^p(x)-r_n^p(x)S_j^p(x)]\delta_{j,k-1}^p(x)}{[s_{n+1}^p(x)r_n^p(x)-s_n^p(x)r_{n+1}^p(x)](j+\tau_0)^{p+\frac32}}
\end{aligned}
\end{equation}
for $k\geq1$ recursively. We shall show that for fixed $p$ and sufficiently large but also fixed $n$,
the sequence $\{\delta_{n,k}^p(x)\}_{k\geq0}$ is convergent as $k\to\infty$.
Since $\widetilde{A}_s(\zeta)$ and $\widetilde{B}_s(\zeta)$ are bounded for $t<t_2-\sigma$,
it follows from (\ref{eq:6.6})--(\ref{eq:6.9}) that
\begin{equation}\label{eq:6.35}
|r_n^p(x)|\leq
CN^\frac{1}{2}
\left(\frac{4\zeta^2}{4-(\alpha_0't+\beta_0')^2}\right)^\frac14
\big[|J_\nu(N\zeta)|+|J_{\nu+1}(N\zeta)|\big]
\end{equation}
and
\begin{equation}\label{eq:6.36}
|s_n^p(x)|\leq
CN^\frac{1}{2}
\left(\frac{4\zeta^2}{4-(\alpha_0't+\beta_0')^2}\right)^\frac14
\big[|W_\nu(N\zeta)|+|W_{\nu+1}(N\zeta)|\big]
\end{equation}
for some positive constant $C$. Furthermore, by virtue of the
behaviors of $ J_\nu$ and $W_\nu$, we have from (\ref{eq:6.17}), (\ref{eq:6.19}), (\ref{eq:6.35}) and
(\ref{eq:6.36}) that
\begin{equation}\label{eq:6.37}
|R_n^p(x)s_n^p(x)|\le M'N^\frac{1}{2}
\quad\text{and}\quad
|S_n^p(x)r_n^p(x)|\le M'N^\frac{1}{2}
\end{equation}
for some positive constant $M'$.
It then follows from (\ref{eq:6.33}) that
\begin{equation*}
\left|s_{n+1}^p(x)r_n^p(x)-r_{n+1}^p(x)s_n^p(x)+\frac2\pi\right|
\leq\frac{2M'}{p}\cdot\frac{1}{N^p}.
\end{equation*}
Since the right-hand side tends to zero, this estimate gives
\begin{equation}\label{eq:6.38}
|s_{n+1}^p(x)r_n^p(x)-r_{n+1}^p(x)s_n^p(x)|>\frac{1}{\pi}
\end{equation}
for large $n$. A combination of (\ref{eq:6.34}) and (\ref{eq:6.35})--(\ref{eq:6.38}) yields
\begin{equation*}
\begin{aligned}
|\delta_{n,1}^p(x)|\le&\pi\sum_{j=n+1}^\infty
\frac{[|s_n^p(x)r_j^p(x)|+|r_n^p(x)s_j^p(x)|]|S_j^p(x)|}{(j+\tau_0)^{p+\frac{3}{2}}}
\\ \leq& M''
\left(\frac{4\zeta^2}{4-(\alpha_0't+\beta_0')^2}\right)^\frac14
\sum_{j=n+1}^\infty\frac{N^\frac{1}{2}}{(j+\tau_0)^{p+1}}\big[|W_\nu(N\zeta)|+|W_{\nu+1}(N\zeta)|\big],
\end{aligned}
\end{equation*}
where we have also made use of the monotonicity properties and the asymptotic behaviors of the modified Bessel functions
$I_\nu$ and $K_\nu$; see (\ref{eq:5.8}), (\ref{eq:5.11}) and (\ref{eq:5.12}).
Hence,
\begin{equation} \label{eq:6.39}
|\delta_{n,1}^p(x)| \leq
\frac{M''}{p}\frac{1}{N^{p-\frac{1}{2}}}
\left(\frac{4\zeta^2}{4-(\alpha_0't+\beta_0')^2}\right)^\frac14
\big[|W_\nu(N\zeta)|+|W_{\nu+1}(N\zeta)|\big].
\end{equation}
Similarly, we can prove by induction that
\begin{equation}\label{eq:6.40}
\begin{aligned}
&|\delta_{n,k}^p(x)-\delta_{n,k-1}^p(x)|\\
\leq&
\left(\frac{M''}{p} \frac{1}{N^{p+1}}\right)^kN^\frac32
\left(\frac{4\zeta^2}{4-(\alpha_0't+\beta_0')^2}\right)^\frac14
\big[|W_\nu(N\zeta)|+|W_{\nu+1}(N\zeta)|\big],
\end{aligned}
\end{equation}
from which it follows that
\begin{equation}\label{eq:6.41}
\delta_{n,k}^p(x)=\sum_{m=1}^k[\delta_{n,m}^p(x)-\delta_{n,m-1}^p(x)]
\end{equation}
converges as $k\to\infty$, for all $n>2M''-\tau_0$. Clearly,
the limiting function $\delta_n^p(x)$ satisfies (\ref{eq:6.26}).
Thus $Q_n(x)=s_n^p(x)+\delta_n^p(x)$ is a solution of (\ref{eq:1.7})
satisfying (\ref{eq:6.20}). Furthermore, we have from
(\ref{eq:6.39})--(\ref{eq:6.41}) that
\begin{equation*}
|\delta_n^p(x)|\le \frac{M'_p}{N^{p-\frac{1}{2}}}
\left(\frac{4\zeta^2}{4-(\alpha_0't+\beta_0')^2}\right)^\frac14
\big[|W_\nu(N\zeta)|+|W_{\nu+1}(N\zeta)|\big].
\end{equation*}
Recall that $\widetilde{A}_s(\zeta)$ and $\widetilde{B}_s(\zeta)$ are bounded for $t<t_2-\sigma$.
By taking an extra term in the expansion in (\ref{eq:6.2}), we have
\begin{equation}\label{eq:6.42}
\begin{aligned}
|\delta_n^p(x)|
\leq&|\delta_n^{p+1}(x)|+\frac{M''_{p}}{N^{p+\frac12}}
\left(\frac{4\zeta^2}{4-(\alpha_0't+\beta_0')^2}\right)^\frac14\\
&\hspace{2.2cm}\times
\big[|W_\nu(N\zeta)||\widetilde{A}_{p+1}(\zeta)|+|W_{\nu+1}(N\zeta)||\widetilde{B}_{p+1}(\zeta)|\big]\\
\leq&
\frac{M_p}{N^{p+\frac{1}{2}}}
\left(\frac{4\zeta^2}{4-(\alpha_0't+\beta_0')^2}\right)^\frac14
\big[|W_\nu(N\zeta)|+|W_{\nu+1}(N\zeta)|\big]
\end{aligned}
\end{equation}
for some positive constants $M''_p$ and $M_p$, and (\ref{eq:2.21}) follows.
(Note that there is an extra factor of $N^\frac12$ outside the square brackets in (\ref{eq:2.19}),
so the error terms $\delta_n^p$ in (\ref{eq:2.21}) and (\ref{eq:6.42}) have a slightly different meaning.)

In a similar manner as we have done for $\delta_n^p(x)$, it can be proved that
\begin{equation}\label{eq:6.43}
|\varepsilon_n^p(x)| \leq\frac{M_p}{N^{p+\frac12}}
\left(\frac{4\zeta^2}{4-(\alpha_0't+\beta_0')^2}\right)^\frac14
\big[|J_\nu(N\zeta)|+|J_{\nu+1}(N\zeta)|\big],
\end{equation}
and hence (\ref{eq:2.20}) follows.
This completes the proof of Theorem~\ref{thm:1}.

\section{An example}

As an illustration,
we consider the orthogonal polynomials $p_n(x)$ associated with the  Laguerre-type weights
$w(x)=x^\alpha\exp(-q_mx^m)$, $x>0$, $\alpha>-1$, $q_m>0$.
This weight is a Freud weight on the half line;
the asymptotics of $p_n(x)$ have been investigated by Vanlessen~\cite{Vanlessen} via a Riemann-Hilbert method.
For the Freud weight on the whole real line, the asymptotics of the associated polynomials can be found in \cite{Deift,Krie,WongZhang}.
The notation $p_n(x)=\gamma_nx^n+\cdots$ with leading coefficient $\gamma_n>0$
denotes the $n$-th degree orthonormal polynomial with respect to $w(x)$.
It is known that the sequence of $p_n(x)$ satisfies the three-term recurrence relation
\begin{equation}\label{eq:7.1}
xp_n(x)=b_np_{n+1}(x)+a_np_n(x)+b_{n-1}p_{n-1}(x),\qquad n=1,2,\dots,
\end{equation}
with $p_{-1}(x)=0$ and $p_0(x)=\gamma_0>0$. The recurrence coefficients $a_n$ and $b_{n-1}$
have the asymptotic expansions
\begin{equation}\label{eq:7.2}
b_{n-1}\sim n^{\frac1m}r_m\left\{\frac14+\frac{\alpha}{8m}\frac 1n+\frac{c_2}{n^2}+\O{\frac1{n^3}}\right\}
\end{equation}
and
\begin{equation}\label{eq:7.3}
a_n\sim n^{\frac1m}r_m\left\{\frac12+\frac{\alpha+1}{4m}\frac 1n+\frac{d_2}{n^2}+\O{\frac1{n^3}}\right\},
\end{equation}
where $c_2$ and $d_2$ are some constants and
\begin{equation}\label{eq:7.4}
r_m=\left(\frac12mq_m\prod_{j=1}^m\frac{2j-1}{2j}\right)^{-1/m};
\end{equation}
see \cite{Vanlessen}.
We define  a sequence $\{K_n\}$ by
$$
K_n=n^{-\frac1{2m}}\prod_{l=0}^\infty\Big[\Big(\frac{n+2l}{n+2l+2}\Big)^\frac1{2m}\cdot \frac{b_{n+2l+1}}{b_{n+2l}}\Big].
$$
Note that $K_{n+1}/K_{n-1}=b_{n-1}/b_n$ and $K_n\sim n^{-\frac 1{2m}}$. We then put
\begin{equation}\label{eq:7.5}
A_n\equiv -\frac1{b_n}\frac{K_n}{K_{n+1}},\quad
B_n\equiv \frac{a_n}{b_n}\frac{K_n}{K_{n+1}},
\quad
\mathcal{P}_n(x)\equiv (-1)^n[w(x)]^{\frac12}p_n(x)\frac{1}{K_n}.
\end{equation}
The three-term recurrence relation in (\ref{eq:7.1}) now becomes
\begin{equation}\label{eq:7.6}
\mathcal{P}_{n+1}(x)+\mathcal{P}_{n-1}(x)=(A_nx+B_n)\mathcal{P}_n(x).
\end{equation}
A slight computation gives the following asymptotic expansions
\begin{equation}\label{eq:7.7}
\frac{K_n}{K_{n+1}}\sim 1+\frac{1}{2m}\frac1n-\frac{4m+2\alpha m-1}{8m^2}\frac{1}{n^2}+\O{\frac1{n^3}},
\end{equation}
\begin{equation}\label{eq:7.8}
A_n\sim n^{-\frac1m}r_m^{-1}\left\{-4+\frac{2+2\alpha}{m}\frac{1}{n}+\O{\frac1{n^2}}\right\}
\end{equation}
and
\begin{equation}\label{eq:7.9}
B_n\sim 2+\frac{(4m^2-4m+1)\alpha^2-m^2}{4m^2}\frac{1}{n^2}+\O{\frac1{n^3}}.
\end{equation}
In terms of our notations, $\theta=\frac1m$ and $\tau_0=\frac{\alpha+1}{2}$. Set $N=n+\tau_0$. We then have
\begin{equation}\label{eq:7.10}
A_n\sim N^{-\frac1m}\sum_{s=0}^\infty\frac{\alpha_s'}{N^s},\qquad
B_n\sim \sum_{s=0}^\infty\frac{\beta_s'}{N^s},
\end{equation}
where
$$
\alpha_0'=-4r_m^{-1},\quad\alpha_1'=0,\quad\beta_0'=2,\quad\beta_1'=0,\quad \beta_2'=\frac{(2m-1)^2\alpha^2-m^2}{4m^2}.
$$
Let $x=N^\theta t$. To simplify matters, we further set $t=r_m z$ and $\omega_n=N^\theta r_m $, so that $x=\omega_n z$.
Since $\theta=\frac1m\neq0,2$ and $\beta_0'=2$, the two transition points are $t_1=0$ and $t_2=r_m$; cf. (\ref{eq:1.11}).
We can now apply Theorem~\ref{thm:1}. In terms of the new scaled variable $z$, the two transition points are $z_1=0$ and $z_2=1$.
From (\ref{eq:2.15}), it can be shown that $\nu=\alpha$, and by (\ref{eq:4.9})
\begin{equation}\label{eq:7.11}
\zeta(z)=\arccos(1-2z)+\frac{2\sqrt{z(1-z)}}{2m-1}{}_2F_1\left(1,1-m;\frac32-m;z\right),\quad z<1-\sigma.
\end{equation}

\begin{rem}
For any given integer $m>0$, the hypergeometric function in (\ref{eq:7.11}) is a polynomial of  degree $(m-1)$.
For example, when $m=1$, it is $1$; when $m=2$, it is $2z+1$; and when $m=3$, it is $\frac83z^2+\frac43z+1$.
\end{rem}

From Theorem~\ref{thm:1}, it follows that we have two linearly independent solutions $P_n(x)$ and $Q_n(x)$
given in (\ref{eq:2.18}) and (\ref{eq:2.19}).
Hence,
\begin{equation}\label{eq:7.12}
\mathcal{P}_n(\omega_nz)= C_1(x) P_n(x)+C_2(x)Q_n(x),
\end{equation}
where $C_1(x)$ and $C_2(x)$ are functions depending only on $x$.
To determine $C_1(x)$ and $C_2(x)$,
we note that in terms of our notations, equation (2.12) in \cite{Vanlessen} reads
\begin{equation}\label{eq:7.13}
[w(\beta_n z)]^{\frac12}p_n(\beta_n z)\sim \sqrt{\frac{2}{\pi\beta_n}}
\frac{[(2z-1)+2\sqrt{z(z-1)}]^{\frac{\alpha+1}{2}}}{2z^\frac14(z-1)^\frac14}
\exp\{-\i n\zeta(z)\},
\end{equation}
where $\beta_n=n^{\frac 1m}r_m$.
This asymptotic formula holds uniformly for $z$ in any complex subset $K\subset\mathbb C\setminus[0,1]$.
Note that our uniform asymptotic expansion of $\mathcal{P}_n(\omega_nz)$ (\ref{eq:7.12}) is also valid in a neighbourhood of any subinterval of $(-\infty,1-\sigma]$ in the complex $z$-plane.
Since $\omega_n=(1+\tau_0/n)^\frac1m \beta_n$, matching these two asymptotic formulas in an overlapping region,
and using the behaviors of $J_\nu(x)$ and $Y_\nu(x)$ for large $x$, we obtain $C_1(x)=1$ and $C_2(x)=0$. That is
\begin{equation}\label{eq:7.14}
\begin{aligned}
\mathcal{P}_n(\omega_n z)=&
N^\frac12\left(\frac{\zeta^2(z)}{z(1-z)}\right)^\frac14
\left[J_\alpha(N\zeta)\sum_{s=0}^p\frac{
\widetilde{A}_s(\zeta)}{N^s}\right.\\
&\hspace{3.4cm}+\left.J_{\alpha+1}(N\zeta)
\sum_{s=0}^p\frac{\widetilde{B}_s(\zeta)}{N^s}
+\varepsilon_n^p\right],
\end{aligned}
\end{equation}
where $\widetilde{A}_0(\zeta)=1$, $\widetilde{B}_0(\zeta)=0$, and
\begin{equation}\label{eq:7.15}
|\varepsilon_n^p|\leq
\frac{M_p}{N^{p+1}}
\big[|J_\alpha(N\zeta)|+|J_{\alpha+1}(N\zeta)|\big]
\end{equation}
for all $z\leq 1-\sigma$.
We can also use the result in \cite{ww2} to derive a uniform asymptotic expansion for $\mathcal{P}_n(x)$ near the turning point $z_2=1$,
and we have
\begin{equation}\label{eq:7.16}
\begin{aligned}
(-1)^n\mathcal{P}_n(\omega_n z)=
N^\frac16\left(\frac{\eta(z)}{z(z-1)}\right)^\frac14
&\left[\Ai(N^\frac23\eta)\sum_{s=0}^p\frac{
\overline{A}_s(\eta)}{N^{s}}\right.\\
&\hspace{0.4cm}+\left.\Ai'(N^\frac23\eta)
\sum_{s=0}^p\frac{\overline{B}_s(\eta)}{N^{s+1/3}}
+\overline{\varepsilon_n^p}\right]
\end{aligned}
\end{equation}
for $z\geq\sigma>0$, where $\overline{A}_0(\eta)=1$, $\overline{B}_0(\eta)=0$,
\begin{equation}\label{eq:7.17}
\frac23[-\eta(z)]^\frac{3}{2}
=\arccos(2z-1)-\frac{2\sqrt{z(1-z)}}{2m-1}\cdot{}_2F_1\left(1,1-m;\frac32-m;z\right)
\end{equation}
for $\sigma\leq z\leq 1$,
and $\eta(z)$ analytically continued to $\mathbb C\setminus\{(-\infty,0]\cup[1,-\i\infty)\}$
such that $\eta(z)>0$ for $z>1$. The error estimation is given by
\begin{equation}\label{eq:7.18}
\big|\overline{\varepsilon_n^p}\big|\leq
\frac{M_p}{N^{p+1}}
\widetilde{\Ai}(N^\frac23\eta)
\end{equation}
with the modulus function $\widetilde{\Ai}(N^\frac23\eta)$ as defined in \cite[eq. (7.10)]{ww2}.
Note that the two results in (\ref{eq:7.14}) and (\ref{eq:7.16}) together cover the whole real line.

\begin{rem} In the special case of $m=1$ and $q_m=1$, we get two uniform asymptotic expansions for Laguerre polynomials,
which agree with the results obtained by steepest decent method for integrals \cite{fw}
or WKB approximations for differential equations \cite[Chaps. 11 and 12]{Olver}.

For orthogonal polynomials associated with the weight $x^\alpha \exp(-Q(x))$, $x>0$,
$\alpha>-1$ and $Q(x)$ is a polynomial of $m$-th degree with positive leading coefficient,
the asymptotic expansions of the recurrence coefficients will be in powers of $1/n^{\frac1m}$,
instead of in powers series of $1/n$; cf. (\ref{eq:1.8}).
In such a case, one can modify the method provided in this paper
to get a pair of linearly independent solutions to the three-term recurrence relation given in Theorem~\ref{thm:1},
which are also in terms of Bessel functions near the transition point $t_1=0$.
However, the asymptotic expansions for these solutions are on longer in powers of $1/N$,
but in powers of $1/N^{\frac1m}$.
This result will lead to Bessel-type asymptotic expansions given by Vanlessen in \cite{Vanlessen}.
\end{rem}

As another example, confluent hypergeometric functions $M(a,b;x)$ and $\Gamma(a-b+1)\cdot U(a,b;x)$ satisfy the three-term recurrence relation in $a$:
\begin{equation}\label{eq:7.19}
ay(a+1,b;x)+(a-b)y(a-1,b;x)=(2a-b+x)y(a,b;x).
\end{equation}
A straightforward calculation shows that equation (\ref{eq:7.19}) corresponds to our case for $\theta=1$, $\alpha_0=1$ and $\beta_0=2$.
A direct application of Theorem~\ref{thm:1} yields Bessel-type expansions as $a\to+\infty$.
Replacing $a$ by $-a$ in (\ref{eq:7.19}), one can obtain again, by the main theorem, Bessel-type expansions for $M(a,b;x)$ or $U(a,b;x)$
as $a\to-\infty$; see \cite[\S 13.8($\mathrm{iii}$)]{NIST}.

\section*{Acknowledgements}
The work of Y. Li is supported in part by the HKBU Strategic Development Fund
and a start-up grant from Hong Kong Baptist University.

\end{document}